\newtheorem{theorem}{Theorem}%[section]
\theoremstyle{plain}
\newtheorem{proposition}[theorem]{Proposition}
\theoremstyle{definition}
\newtheorem{remark}[theorem]{Remark}
\newtheorem{assumption}[theorem]{Assumption}
\def\eps{\varepsilon}
\def\V{\mathcal{V}}
\def\Q{\mathcal{Q}}
\def\ceps{\bar c^{\,\eps}}
\def\cepsg{\bar c^{\,\eps}}
\def\uepsgbar{ \bar u^{\eps}}
\def\uepsg{ u^{\eps}}
\def\co{\bar c^{\,0}}
\def\cog{\bar c^{\,0}}
\def\uogbar{\bar u^0}
\def\dx{\partial_x}
\def\dxx{\partial_{xx}}
\def\dt{\partial_t}
\def\dtt{\partial_{tt}}
\def\dtx{\partial_{tx}}
\newcommand{\tr}{{\mathrm{tr}}}
\def\RR{\mathbb{R}}
\def\A{\mathcal{A}}
\def\X{\mathcal{X}}
\def\H{\mathcal{H}}
\def\SC{\mathcal{S}}
\title[]{A kinetic chemotaxis model\\ and its diffusion limit in slab geometry}
\author{H. Egger$^{a,b}$, K. Hellmuth$^c$, N. Philippi$^b$, \and M. Schlottbom$^d$}
\address{$^a$Institute for Numerical Mathematics, Johannes-Kepler University Linz, Austria\\
$^b$Johann Radon Institute for Computational and Applied Mathematics, Linz, Austria\\
$^c$Department of Mathematics, University of Würzburg, Germany\\
$^d$Department of Applied Mathematics, University of Twente, Enschede, The Netherlands}
\email{herbert.egger@jku.at}
\email{kathrin.hellmuth@uni-wuerzburg.de}
\email{nora.philippi@ricam.oeaw.ac.at}
\email{m.schlottbom@utwente.nl}
\begin{document}

\begin{abstract}
Chemotaxis describes the intricate interplay of cellular motion in response to a chemical signal. We here consider the case of slab geometry which models chemotactic motion between two infinite membranes. Like previous works, we are particularly interested in the asymptotic regime of high tumbling rates.
We establish local existence and uniqueness of solutions to the kinetic equation and show their convergence towards solutions of a parabolic Keller-Segel model in the asymptotic limit. 
In addition, we prove convergence rates with respect to the asymptotic parameter under additional regularity assumptions on the problem data. 
Particular difficulties in our analysis are caused by vanishing velocities in the kinetic model as well as the occurrence of boundary terms. 
\end{abstract}

\maketitle

\vspace*{-1em}

\begin{quote}
\noindent 
{\small {\bf Keywords:} 
chemotaxis, Keller-Segel system, slab geometry, local existence, diffusion limit, asymptotic analysis, convergence rate}
\end{quote}

\begin{quote}
\noindent
{\small {\bf AMS-classification (2000):}  
92C17, % chemotaxis
35B40, % Asymptotic behaviour of PDEs
% 35Q49, % Transport equations
% 35A01, % Existence problems for PDEs
% 35B45, % A-priori estimates for solutions of PDEs
35Q92, % PDEs in connection with biology
35M33  % IBVPs fo mixed type
}
\end{quote}

% \begin{quote}
% \noindent
% {\small {\bf Funding:}  
% K.H. acknowledges support of the German Academic Scholarship Foundation (Studienstiftung des deutschen Volkes) as well as the Marianne-Plehn-Programm.
% }
% \end{quote}

\section{Introduction}
\label{sec:intro}

% \begingroup

The term \emph{chemotaxis} is generally used to describe the motion of cells or bacteria in response to the gradient of a chemical stimulus~\cite{Kretschmer1980,Perthame07}. It has been recognized as a fundamental mechanism in applications like immune response~\cite{Wu05}, embryological development~\cite{Dormann06}, or bacterial population dynamics~\cite{lauffenburger91}. 
%
% The importance of spatial constraints to the movement has been highlighted in \cite{ Bhattacharjee19, moure18}. 
%
Starting from stochastic descriptions, the first macroscopic models for chemotaxis describing the evolution of the particle densities were derived by Patlak~\cite{Patlak1953}, Keller and Segel~\cite{KellerSegel1971}. We refer to~\cite{HillenPainter09,Horstmann03} and~\cite{Perthame07} for a survey of results and additional references. 
Alt and co-workes~\cite{Alt1980,Othmer88} formulated multi-dimensional kinetic models, describing the particle densities, and investigated their diffusion limits by an asymptotic analysis. 
The rigorous justification of the diffusion limit was put forward in \cite{HillenStevens00} for the one-dimensional case, and in \cite{OthmerHillen2002} and \cite{Chalub04} for multiple space dimensions.

\subsection*{Scope.}
In this paper, we study a kinetic model for chemotaxis and its diffusion limit in \emph{slab geometry}, which is of relevance for the propagation of bacteria when passing membranes or small slabs of porous media, soil or biological gels \cite{bhattacharjee21,Gaveau17,SadrGhayeni1999}. Numerical studies for chemotaxis in slab geometry can be found in \cite{CarilloYan13,Gosse13}. 
Corresponding diffusion limits have been considered in  
\cite{BalRyzhik00} in the context of radiative transfer. 

Let us note that the particle density in slab geometry is a function of one space and one velocity variable which, in contrast to \cite{HillenStevens00,KellerSegel1971,Patlak1953}, varies continuously between $\pm 1$. The kinetic equation hence does not degenerate into a system of two coupled hyperbolic equations.

\subsection*{Main results.}
The focus of this paper lies on a rigorous asymptotic analysis for chemotaxis in slab geometry. In particular, we establish
\begin{itemize}
\item local well-posedness of a kinetic model for chemotaxis in slab geometry and a-priori estimates for solutions which are explicit in the asymptotic parameter;
\item convergence to solutions of a one-dimensional Keller-Segel system in the diffusion limit of vanishing asymptotic parameter and quantitative convergence rates. 
\end{itemize}
Similar to previous work, we use energy-estimates and fixed-point arguments to establish existence and uniqueness of solutions to the kinetic model and to prove convergence to solutions of the limit system. 
To the best of our knowledge, our results can, however, not be deduced from previous work: The assumptions in \cite{Chalub04} do not allow to treat scattering operators of the form considered here and the work \cite{BalRyzhik00} does not include the nonlinear coupling to the equation for the chemoattractant. The results of \cite{HillenStevens00} are valid for a  discrete velocity model, but do not generalize to the case of slab geometry with continuous velocities.
As mentioned in \cite{Perthame07}, the asymptotic analysis for the kinetic chemotaxis model is in line with that for the radiative transfer equation. We thus follow a similar approach as outlined in \cite{Bardos88,Dautray93,EggerSchlottbom14}
and derive suitable extensions to chemotaxis in slab geometry. 

\subsection*{Outline}
In Section~\ref{sec:prelim}, we fix  our notation and state the kinetic model under consideration. We then introduce our assumptions and state the main results of the paper. 
Their proofs are give in Sections~\ref{sec:proof:thm:2}--\ref{sec:proof:thm:3b}. 
In Section~\ref{sec:conclusion}, we review some details in the proofs and discuss some possible extensions of our results.

%%%%%%%%%%%%%%%%%%%%%%%%%%%%%%%%%%%%%%%%%
\section{Preliminaries and main results}
\label{sec:prelim}
%%%%%%%%%%%%%%%%%%%%%%%%%%%%%%%%%%%%%%%%%

We start with introducing our notation and the model problem under consideration, then state the main assumptions for the subsequent analysis and present our main results. 

\subsection{Notation}
Let $\X=(0,\ell)$, $\ell > 0$, denote the spatial domain. We write $L^p(\X)$, $W^{k,p}(\X)$, $H^k(\X)$ for the usual Sobolev spaces defined on $\X$. Similar notation is used for functions defined on other domains. 
The range of velocities is denoted by $\V = (-1,1)$ and we write $\Q=\X \times \V$ for the phase space. 
Note that any function $\bar f \in L^p(\X)$ can be interpreted as a function $f \in L^p(\Q)$ by constant extension $f(x,v) = \bar f(x)$. 
We use the bar symbol throughout to indicate that functions do not depend on velocity $v$. The same symbol is used for the velocity average of a function $f\in L^p(\Q)$, i.e., 
\begin{align*}
    \bar f = \frac{1}{2}\int_\V f(v)\ dv.
\end{align*}
Following \cite{Dautray93}, we define in- and outflow boundaries of the phase space $\Q$ by 
\begin{align*}
    \Gamma^{in} =\{0\}\times (0,1)\cup\{\ell\}\times (-1,0) \qquad  \text{and} \qquad
    \Gamma^{out} = \{0\}\times (-1,0)\cup \{\ell\}\times (0,1);
\end{align*}
and we set $\Gamma=\Gamma^{in} \cup \Gamma^{out}$; see Figure~\ref{fig:inflowoutflowphase} for an illustration. 
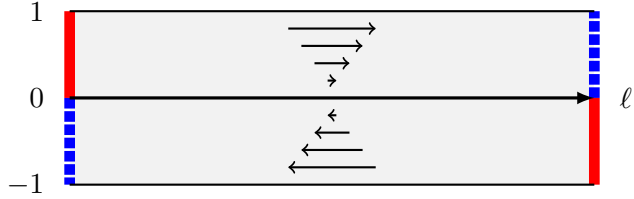
\begin{figure}[ht!]
\centering
\begin{tikzpicture}[scale = 2.3, label distance = 2mm]
	\draw[white,fill=black!5] (-1.5,0.5) rectangle (1.5,-0.5);
    \coordinate[label=left: $0$] (A) at (-1.5,0);
    \coordinate[label=right: $\ell$] (B) at (1.5,0);
    \coordinate[label=left: $1$] (C) at (-1.5,0.5);
    \coordinate[label=left: $-1$] (D) at (-1.5,-0.5);
    \coordinate (E) at (1.5,0.5);
    \coordinate (F) at (1.5,-0.5);
    % \coordinate[label=right: $1$] (E) at (3,0.5);
    % \coordinate[label=right: $-1$] (F) at (3,-0.5);
    %
    \draw[-, line width=4pt, red] (A) to (C);
    \draw[densely dotted, line width=4pt, blue] (A) to (D);
    \draw[densely dotted, line width=4pt, blue] (B) to (E);
    \draw[-, line width=4pt, red] (B) to (F);
    \draw[-,thick] (C) to (E);
    \draw[-,thick] (D) to (F);
    \draw[-latex, very thick] (A) to (B);
	\draw[->,thick] (-0.25,0.4) -- (0.25,0.4);	
	\draw[->,thick] (-0.175,0.3) -- (0.175,0.3);	
	\draw[->,thick] (-0.1,0.2) -- (0.1,0.2);	
	\draw[->,thick] (-0.025,0.1) -- (0.025,0.1);	
	\draw[<-,thick] (-0.25,-0.4) -- (0.25,-0.4);	
	\draw[<-,thick] (-0.175,-0.3) -- (0.175,-0.3);	
	\draw[<-,thick] (-0.1,-0.2) -- (0.1,-0.2);	
	\draw[<-,thick] (-0.025,-0.1) -- (0.025,-0.1);	
\end{tikzpicture}
\caption{Phase space $\Q=\X \times \V$ and in-/outflow boundaries $\Gamma^{in}$, $\Gamma^{out}$ depicted in (red, solid) and (blue, dotted), respectively.}
\label{fig:inflowoutflowphase}
\end{figure}
By the product rule for differentiation and the fundamental theorem of calculus, we see that 
\begin{align}
\int_{\Q} &v\dx u(x,v) \,  w(x,v) +  u(x,v) \, v\dx w(x,v) \, d(x,v) 
= \int_{\Gamma} v \,u(x,v) \, w(x,v) \, d\Gamma \notag \\
&= \int_{\Gamma^{out}} u(x,v) \, w(x,v) \, |v| \, d\Gamma 
- \int_{\Gamma^{in}} u(x,v) \, w(x,v) \, |v| \, d\Gamma \label{eq:ibp}
\end{align}
for sufficiently smooth functions $u,w$ defined on $\Q$. 
We write $L^p(\Gamma;|v| d\Gamma)$ for the space of functions defined on $\Gamma$ whose $p$-th power is integrable with respect to the measure $|v| d\Gamma$. 
Given $T>0$, we denote by $L^p(0,T;Y)$ and $W^{k,p}(0,T;Y)$ the Bochner spaces of functions defined on $(0,T)$ with values in some Banach space $Y$ equipped with the usual norms; see \cite{Evans10} for instance. 
For brevity, we sometimes write $L^p(Y)$ for  $L^p(0,T;Y)$, when the meaning is clear from the context.
The spaces $C^k([0,T];Y)$ consists of all functions which are $k$-times continuously differentiable with respect to time.

\subsection{Model problem and main assumptions}
The differential equations governing the kinetic chemotaxis model to be considered in the rest of the manuscript are given by
\begin{alignat}{2}
    \eps^2 \dt u^\eps + \eps v \dx u^\eps + \sigma(u^\eps-\bar u^\eps) &= \eps\alpha v \dx \ceps \bar u^\eps \qquad &\text{in } \Q\times (0,T),\label{eq:u}\\
    \dt\bar c^\eps - D\dxx \ceps + \beta \ceps &= \gamma\bar u^\eps &\text{in } \X\times (0,T).\label{eq:c}
\end{alignat}
Recall that $\bar u^\eps=\frac{1}{2} \int_\V u^\eps(x,v) \, dv$ denotes the velocity average of the population density~$u^\eps$, whereas the concentration $\ceps$ does not depend on $v$ naturally.
The two equations are complemented by the boundary and initial conditions 
\begin{alignat}{8}
    % u^\eps & = \bar g_u \qquad & \text{ on }  \Gamma^{in} \times (0,T), \label{eq:u:bc}\\
    % \ceps & = \bar g_c & \text{on } \partial \X \times (0,T).\label{eq:c:bc}
    u^\eps &= \bar g_u \quad && \text{on }  \Gamma^{in} \times (0,T), \qquad \qquad &
    u^\eps(0) &= \bar u_0 \quad &&\text{on }\Q, \label{eq:u:ibc} \\
    \ceps & = \bar g_c \quad && \text{on } \partial \X \times (0,T), \qquad & 
    \ceps(0) &= \bar c_0 \quad &&\text{on } \X.\label{eq:c:ibc}
\end{alignat}
The boundary and initial data are thus chosen independent of the velocity $v$. This facilitates the analysis in the asymptotic regime but could be relaxed to some extent.
For our arguments, we will consider different time horizons $0 < T \le T^*$ and asymptotic parameters $0 < \eps \le \eps^*$ with $T^*,\eps^*>0$ fixed, and we make use of the following assumptions which allow us to establish the existence of sufficiently regular solutions to the model \eqref{eq:u}--\eqref{eq:c:ibc}.
\begin{assumption}\label{ass:1}
The parameters $\alpha,  \beta, \gamma\ge 0$ and $D,\eps^*,T^*>0$ are given constants. 
Furthermore $\sigma\in L^\infty(\X)$ with $0<\sigma_{min}\le\sigma(x)\le\sigma_{max}$ for a.a. $x \in \X$.
The initial data satisfy $\bar u_0 \in H^2(\X)$, $\bar c_0 \in H^4(\X)$, 
and the two boundary data $\bar g_u, \bar g_c \in W^{2,\infty}(0,T^*;\RR^2)$. 
We use the same notation $\bar g_u,\bar g_c$ for the linear extensions of the boundary data  to function $\bar g_u,\bar g_c \in W^{2,\infty}(0,T^*;W^{2,\infty}( \X))$.
In addition, we require the compatibility conditions $\bar g_u(0)=\bar u_0$, $\bar g_c(0)=\bar c_0$, and $\dt \bar g_c(0)=D \dxx \bar c_0 - \beta \bar c_0 + \gamma \bar u_0$ hold for $x \in \partial\X$.
\end{assumption}

%
%\kh{we need $\bar g_u\in H^2_t(H^2_x)$: Proof Prop 7 $\dtt \bar g_u\in L^2(L^2)$}

\subsection{Main results}
The above conditions allow us to establish the existence of a unique regular solution of the problem and to derive corresponding a-priori bounds. Their explicit dependence on the parameter $\eps$ will be of importance for the asymptotic analysis later on. 
\begin{theorem}\label{thm:2}
Let Assumption~\ref{ass:1} hold. Then, there exists a time horizon $T>0$ such that for any $0<\eps\le \eps^*$ the system \eqref{eq:u}--\eqref{eq:c:ibc} has a unique solution 
\begin{align*}
&u^\eps \in C^1([0,T];L^2(\Q)) \qquad \text{with} \qquad v \dx u^\eps \in C^0([0,T];L^2(Q)),\\
&
\ceps 
\in H^2(0,T;L^2(\X))\cap H^1(0,T;H^2(\X)).
\end{align*}
Moreover, the following a-priori bounds hold with a uniform constant $C>0$:
\begin{alignat*}{8}
&(a) \quad && \|u^\eps\|_{C^0([0,T];L^2(\Q))} \le C; 
\qquad &
&(b) \quad && \|u^\eps-\bar u^\eps\|_{L^2(0,T;L^2(\Q))} \le C\eps; 
\\
&(c) \quad && \|u^\eps - \bar g_u\|_{L^2(0,T;L^2(\Gamma^{out};|v|d\Gamma))} \le C \sqrt{\eps}; 
\qquad  &
&(d) \quad && \|\eps \dt u^\eps\|_{C^0([0,T];L^2(\Q))} \le C; 
\\
&(e) \quad && \|v\dx u^\eps\|_{L^2(0,T;L^2(\Q))} \le C; 
\qquad &
&(f) \quad && \|\ceps\|_{H^1(0,T;L^2(\X))}\le C; 
\\
&(g) \quad && \|\ceps\|_{L^2(0,T;H^2(\X))} \le C;
\qquad &
&(h) \quad && \| \eps \dt \ceps \|_{L^4(0,T;W^{1,\infty}(\X))} \le C. 
\end{alignat*}
%\textcolor{purple}{CHECK ESTIAMTE (h)}
% \begin{enumerate}[label={(\alph*)}]\setlength{\itemsep}{4pt}
% %
% \item 
% $\|u^\eps\|_{C^0([0,T];L^2(\Q))} \le C$,
% \,
% $\|\eps \dt u^\eps\|_{C^0([0,T];L^2(\Q))} \le C$, 
% \, 
% $\|u^\eps-\bar u^\eps\|_{L^2(0,T;L^2(\Q))} \le C\eps$; \label{a}
% \item 
% $\|u^\eps\|_{L^2(\Gamma^{out};|v|ds)}  \le C \sqrt{\eps}$; 
% $\|u^\eps - \bar g_u\|_{L^2(\Gamma^{in};|v|ds)} \le C \sqrt{\eps}$, \, 
% $\|v\dx u^\eps\|_{L^2(0,T;L^2(\Q))} \le C$;\label{b}
% %
% \item  
% $\|\ceps\|_{H^1(0,T;L^2(\X))}\le C$, \,  $\|\ceps\|_{L^2(0,T;H^2(\X))} \le C$, \, and \,
%  $\| \eps \dt \ceps \|_{L^4(0,T;L^\infty(\X))} \le C$. \label{c}
% \end{enumerate}
\end{theorem}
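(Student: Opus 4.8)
The plan is to combine a lifting of the inflow data, linear solution theory for the two subproblems, a Banach fixed-point argument for local existence and uniqueness, and a sequence of energy estimates — crucially performed after subtracting the boundary data — to obtain the $\eps$-uniform bounds (a)--(h). The first step I would take is to reformulate the kinetic equation in terms of $w^\eps = u^\eps - \bar g_u$, with $\bar g_u$ extended constantly in $v$. Since $\bar g_u$ is velocity independent, the scattering term is unaffected ($\sigma(\bar g_u - \overline{\bar g_u})=0$) and $w^\eps$ vanishes on $\Gamma^{in}$; the price is the extra forcing $-\eps^2\dt\bar g_u - \eps v\dx\bar g_u$. This step is essential: testing \eqref{eq:u} directly with $u^\eps$ produces an inflow term $\tfrac{1}{2\eps}\int_{\Gamma^{in}}\bar g_u^2|v|$ that blows up as $\eps\to 0$, whereas for $w^\eps$ it disappears, which is what makes the uniform bound (a) and the sharp rates (b), (c) attainable.

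For local existence I would set up a fixed-point map $\Phi$ on $\bar u$ in a space such as $C^0([0,T];L^2(\X))$: given $\bar u$, solve the parabolic equation \eqref{eq:c} for $\bar c^\eps$ by standard linear theory, then solve the linear kinetic equation for $u^\eps$ with $\bar c^\eps$ as coefficient and the given $\bar u$ in the drift, and return the new velocity average. The kinetic linear theory is the more delicate ingredient because the transport operator $v\dx$ degenerates at $v=0$: traces live only in the weighted space $L^2(\Gamma;|v|d\Gamma)$, so well-posedness and trace control must be set up accordingly, using \eqref{eq:ibp} together with the coercivity of the scattering operator, $\int_\Q\sigma(u^\eps-\bar u^\eps)u^\eps = \int_\Q\sigma(u^\eps-\bar u^\eps)^2 \ge \sigma_{min}\|u^\eps-\bar u^\eps\|_{L^2(\Q)}^2$. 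Since the nonlinearity $\dx\bar c^\eps\,\bar u$ is bilinear, Lipschitz estimates in $\bar u$ follow from the a-priori bounds and $\Phi$ becomes a contraction once $T$ is chosen small; the fixed point is the unique local solution, and the regularity claimed in the theorem is then recovered by a bootstrap on the differentiated equations.

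The bulk of the work lies in the $\eps$-uniform estimates, which I would establish in a specific order. Testing the $w^\eps$-equation with $w^\eps$ and dividing by $\eps^2$, the inflow term is gone, the outflow term $\tfrac{1}{2\eps}\int_{\Gamma^{out}}(w^\eps)^2|v|$ and the dissipation $\tfrac{\sigma_{min}}{2\eps^2}\|w^\eps-\bar w^\eps\|^2$ remain on the left, and the drift and lifting forcings are absorbed using that $v$-odd terms pair only with $u^\eps-\bar u^\eps$, since $\int_\V v\,\bar f\,dv=0$. A Gronwall argument then yields (a), while the retained left-hand side terms integrate to $\|u^\eps-\bar u^\eps\|_{L^2(L^2)}\le C\eps$ (b) and $\|u^\eps-\bar g_u\|_{L^2(\Gamma^{out};|v|d\Gamma)}\le C\sqrt\eps$ (c). Bound (e) then follows algebraically by solving the equation for $\eps v\dx u^\eps$ and invoking (b), (d) and the parabolic bound on $\dx\bar c^\eps$. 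The parabolic bounds (f), (g) are standard given $\bar u^\eps\in L^2(L^2)$, and (h) comes from differentiating \eqref{eq:c} in time and applying parabolic regularity to $\eps\dt\bar c^\eps$, whose initial value is controlled precisely by the compatibility condition $\dt\bar g_c(0)=D\dxx\bar c_0-\beta\bar c_0+\gamma\bar u_0$ together with $\bar c_0\in H^4$ and $\bar u_0\in H^2$.

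The main obstacle I anticipate is the pair (d) and (h), which are coupled: the time-differentiated kinetic estimate for $\dt u^\eps$ produces the forcing $\eps\alpha v\dx(\dt\bar c^\eps)\bar u$, controllable only through $\|\eps\dt\bar c^\eps\|_{W^{1,\infty}}$, that is, through (h); conversely (h) requires $\dt\bar u^\eps=\overline{\dt u^\eps}$, that is, (d). I would close this loop by estimating the time-differentiated system for $(\dt u^\eps,\dt\bar c^\eps)$ simultaneously in a single Gronwall argument, once more after lifting $\dt\bar g_u$ off the inflow and using that $\eps\dt u^\eps(0)=v(-\dx\bar u_0+\alpha\dx\bar c_0\,\bar u_0)$ is admissible $O(1)$ data in $L^2(\Q)$. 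Tracking the powers of $\eps$ carefully through this step, and verifying that the weighted boundary contributions stay nonnegative on the left, is what guarantees that every constant $C$ remains uniform in $\eps$.
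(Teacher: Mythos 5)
Your proposal is correct in substance and follows the same backbone as the paper: the lifting $w=u^\eps-\bar g_u$ (whose velocity-independence kills both the scattering contribution and the otherwise $O(1/\eps)$ inflow term), the energy estimate after testing with $w/\eps^2$ in which the $v$-odd forcings pair only with $w-\bar w$ and are absorbed by the dissipation $\sigma_{min}\eps^{-2}\|w-\bar w\|^2$, Grönwall for (a)--(c), the rearrangement of the equation for (e), maximal parabolic regularity plus the compatibility condition for (f)--(h), and the time-differentiated kinetic estimate with admissible initial datum $\eps\dt u^\eps(0)=v(\alpha\dx\bar c_0\,\bar u_0-\dx\bar u_0)+O(\eps)$ for (d). The one genuinely different choice is the fixed-point architecture. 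The paper contracts on $z\in C^1([0,T];L^2(\Q))$ in the ball $\|z\|_{\eps,T}\le C_z$ for the weighted norm $\|z\|_{\eps,T}^2=\|z\|^2_{L^\infty(L^2)}+\eps^2\|\dt z\|^2_{L^\infty(L^2)}$; there the $\eps^2$-weight exactly compensates the factor $\eps^{-2}$ arising in the estimate of the differentiated source $\bar f_1'=\alpha\dtx\bar c\,\bar z+\alpha\dx\bar c\,\dt\bar z-\dtx\bar g_u$ (since $\|\dt\bar z\|_{L^4(L^2)}\le T^{1/4}C_z/\eps$ on the ball), so existence, uniqueness, the $C^1$ regularity, and the bounds (a), (d) all come out of the linearized estimates in one stroke, with no bootstrap. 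You instead contract in the weaker space $C^0([0,T];L^2(\X))$ on the velocity average (legitimate, since only $\bar z$ enters both linearized equations) and recover (d), (h) and the claimed regularity a posteriori via a coupled Grönwall for $(\dt u^\eps,\eps\dt\bar c^\eps)$ on the nonlinear system; this loop does close, because the dangerous terms $\|\dx\bar c\|^2_{L^4(L^\infty)}\|\eps\dt\bar u^\eps\|^2_{L^4(L^2)}$ and $\|\eps\dtx\bar c\|^2_{L^4(L^\infty)}\|\bar u^\eps\|^2_{L^4(L^2)}$ pick up a factor $T^{1/2}$ from $L^\infty(0,T)\hookrightarrow L^4(0,T)$ and can be absorbed for $T$ small. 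What your route buys is a simpler contraction; what it costs is that the fixed point is a priori only a mild solution, so the "bootstrap on the differentiated equations" is the delicate step: to make it rigorous you must work with difference quotients (formal time-differentiation presupposes the very regularity being proved) and verify $w(0)=\bar u_0-\bar g_u(0)\in D(\A^\eps)$ and $f\in W^{1,1}(0,T;L^2(\Q))$, which is precisely where the paper invokes semigroup theory (m-dissipativity of $-\A^\eps$ via \eqref{eq:ibp} and the solvability of $\A^\eps w+\lambda w=f$) to get classical $C^1$ solutions directly. Spelling out that difference-quotient argument, or simply moving the $\eps$-weighted $C^1$ information into the fixed-point space as the paper does, would complete your sketch.
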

%
%\subsection*{Outline of the proof.}
The existence of a unique solution will be established via fixed-point arguments and results about the linearized evolution problems corresponding to the two differential equations. %
The bounds for the solution are derived by careful energy estimates and interpolation arguments. The detailed proof is presented in Section~\ref{sec:proof:thm:2}. 

\bigskip 

The second main result of the paper is concerned with the behavior of solutions in the asymptotic limit $\eps \to 0$. Our main findings can be summarized as follows. 
\begin{theorem}\label{thm:3}
Let Assumption~\ref{ass:1} hold and $(u^\eps,\ceps)$ be the solutions of \eqref{eq:u}--\eqref{eq:c:ibc} provided by Theorem~\ref{thm:2} for a sequence $\eps \to 0$.
Then $u^\eps \rightharpoonup \bar u^0$ weakly in $L^2(0,T;L^2(\Q))$ and $\ceps \to \co$ in $L^2(0,T;H^1(\X))$, where $(\bar u^0,\co)$ is the unique weak solution of the Keller-Segel system 
\begin{alignat}{2}
\dt\bar u^0 - \dx(\bar \mu\dx\bar u^0 - \bar\chi\dx\co\bar u^0) &= 0 \qquad &&\text{in } \X \times (0,T), \label{eq:u0}\\
\dt\co - D\dxx\co + \beta\co &= \gamma\bar u^0 \qquad &&\text{in } \X\times(0,T), \label{eq:c0}
\end{alignat}
with 
motility and chemotactic coefficient 
\begin{align}
\bar \mu(x) = \sigma(x)^{-1} \frac{1}{2}\int_\V v^2\ dv \qquad \text{and} \qquad \bar\chi(x) = \alpha\bar \mu(x), 
\end{align}
and with boundary and initial conditions
\begin{alignat}{8}
\bar u^0 &= \bar g_u \quad && \text{on } \partial\X\times(0,T), \qquad &
\bar u^0(0) &= \bar u_0 \quad && \text{on } \X, \label{eq:u0:ibc}\\ 
\co &= \bar g_c \quad &&\text{on } \partial \X\times(0,T), 
\qquad &
\bar c^0(0) &= \bar c_0 \quad &&\text{on } \X. \label{eq:c0:ibc}
\end{alignat}
If, in addition, $\sigma \in W^{1,\infty}(\X)$ and $\bar u_0 \in H^2(\X)$, then the asymptotic error is bounded by  
\begin{align} \label{eq:asym}
   \|\uepsg-
   \uogbar\|_{L^\infty(0,T;L^2(\Q))} + \|\cepsg - \cog\|_{L^\infty(0,T;H^1(\X))} \le C \eps
\end{align}
with a constant $C$ that can be chosen independent of $\eps$.
\end{theorem}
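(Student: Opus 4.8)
\section*{Proof strategy}

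The plan is to treat the qualitative convergence and the quantitative rate \eqref{eq:asym} separately, using throughout the uniform a-priori bounds of Theorem~\ref{thm:2}. For the convergence I would first extract, along a subsequence, a weak limit $u^\eps\rightharpoonup\bar u^0$ in $L^2(0,T;L^2(\Q))$ from bound (a), and a strong limit $\ceps\to\co$ in $L^2(0,T;H^1(\X))$ by an Aubin--Lions argument using bounds (f)--(g) together with the equation \eqref{eq:c} for $\dt\ceps$. Bound (b) forces the weak limit of $u^\eps$ to be independent of $v$, which justifies denoting it $\bar u^0$. To identify the limit equations I would pass to the limit in the velocity-averaged (continuity) and first-moment (flux) balances obtained by integrating \eqref{eq:u} against $1$ and $v$ over $\V$: the continuity balance reads $\eps\dt\bar u^\eps+\dx\,\overline{vu^\eps}=0$, while the moment balance expresses the scaled current $\tfrac1\eps\overline{vu^\eps}$ as $-\bar\mu\dx\bar u^\eps+\bar\chi\dx\ceps\bar u^\eps$ up to terms that vanish by (b), (d), (e). The only nonlinear term $\dx\ceps\,\bar u^\eps$ is handled by pairing the strong convergence of $\dx\ceps$ with the weak convergence of $\bar u^\eps$. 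Passing to the limit then yields \eqref{eq:u0}--\eqref{eq:c0}; the inflow condition for $\bar u^0$ is inherited from that of $u^\eps$, whereas the complementary outflow condition $\bar u^0=\bar g_u$ on all of $\partial\X$ is recovered from the trace bound (c). Uniqueness of the weak solution of the Keller--Segel system \eqref{eq:u0}--\eqref{eq:c0:ibc} upgrades subsequential to full convergence.

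For the rate I would use a first-order Chapman--Enskog ansatz $u^\eps_{\mathrm{app}}=\bar u^0+\eps u_1$ with corrector $u_1=\tfrac{v}{\sigma}(-\dx\bar u^0+\alpha\dx\co\,\bar u^0)$, which is exactly the choice annihilating the $O(\eps)$ part of the kinetic residual $R^\eps$. Since the velocity average of the $O(\eps^2)$ remainder coincides with \eqref{eq:u0}, one obtains the crucial solvability identity $\overline{R^\eps}=0$, so that $R^\eps$ is purely microscopic with $\|R^\eps\|_{L^2(\Q)}=O(\eps^2)$. Writing $e_u^\eps=u^\eps-u^\eps_{\mathrm{app}}$ and $e_c^\eps=\ceps-\co$ and subtracting the residual equations, I would test the kinetic error equation with $e_u^\eps$: integration by parts via \eqref{eq:ibp} produces a favourable outflow term $\tfrac\eps2\int_{\Gamma^{out}}(e_u^\eps)^2|v|$, the collision term gives coercivity $\sigma_{min}\|e_u^\eps-\bar e_u^\eps\|^2$, and---using $\overline{R^\eps}=0$---both the residual and the $v$-odd drift contributions are absorbed into this dissipation at the cost of $O(\eps^4)$ and of the parabolic quantity $\eps^2\|\dx e_c^\eps\|^2$. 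A standard parabolic estimate for $e_c^\eps$, whose source is $\gamma\,\bar e_u^\eps$, controls the latter. Adding the two estimates and dividing by $\eps^2$, the $\eps$-weights cancel precisely, so a Grönwall argument closes with the $L^1(0,T)$-integrable coefficient $\|\dx\ceps\|_{L^\infty(\X)}^2+1$ supplied by bounds (g)--(h), while the initial mismatches contribute only $O(\eps^2)$.

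The main obstacle is the inflow boundary term $\tfrac\eps2\int_{\Gamma^{in}}(e_u^\eps)^2|v|$: since $u_1$ has a nonvanishing (and $v$-odd) trace on $\Gamma^{in}$, the raw ansatz matches the inflow data only to $O(\eps)$, which would degrade the rate to $O(\sqrt\eps)$. To recover the sharp $O(\eps)$ I would augment the ansatz by kinetic boundary-layer correctors near $x=0$ and $x=\ell$, solving the associated half-space (Milne) problems, so that the total ansatz matches the inflow data to higher order; their exponential decay renders the bulk $L^2(\Q)$-contribution only $O(\eps^{3/2})$, hence negligible for \eqref{eq:asym}. The delicate point---already flagged in the introduction---is the degeneracy at vanishing velocities $v\to0$, where the transport characteristics stall and the boundary-layer and trace estimates must be carried out in the weighted space $L^2(\Gamma;|v|\,d\Gamma)$; controlling these terms uniformly in $\eps$ is what I expect to require the most care.
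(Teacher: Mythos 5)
Your proposal is sound and, for the qualitative half, essentially matches the paper: same extraction of subsequences from the bounds of Theorem~\ref{thm:2}, Aubin--Lions for $\ceps$, bound (b) to make the limit $v$-independent, and uniqueness of the weak limit solution to upgrade to full convergence. Your identification of the limit equation via the zeroth and first velocity moments is the same computation the paper performs in dual form, testing \eqref{eq:u} with $\sigma^{-1}v\dx\tilde\psi$; likewise your recovery of the boundary condition from bound (c) is a harmless variant of the paper's argument, which instead uses weak continuity of the trace operator on $\{u\in L^2(\Q): v\dx u\in L^2(\Q)\}$ together with the inflow data and the $v$-independence of $\bar u^0$. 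The genuine divergence is in the rate proof. You use the same Chapman--Enskog corrector $\bar u^1=\sigma^{-1}(-\dx\bar u^0+\alpha\dx\co\,\bar u^0)$, the same solvability identity $\bar f^\eps=0$, and the same coupled energy--parabolic--Gr\"onwall closure, but you handle the $O(\eps)$ inflow mismatch $-\eps v\bar u^1$ by constructing half-space (Milne) boundary-layer correctors. The paper does \emph{not} build layers: it splits the remainder $\phi^\eps=\phi^\eps_f+\phi^\eps_g$, where $\phi^\eps_g$ carries the boundary mismatch with zero source and zero initial data, tests with $|\phi^\eps_g|^{p-2}\phi^\eps_g$, shows via H\"older and Jensen that the collision term is dissipative in every $L^p$, obtains $\|\phi^\eps_g(t)\|_{L^p(\Q)}\le C\eps^{1-1/p}$, and lets $p\to\infty$ to get $\|\phi^\eps_g\|_{L^\infty(0,T;L^\infty(\Q))}\le C\eps$ --- exactly the sharp order your Milne layers are designed to restore, by an elementary $L^p$/maximum-principle argument. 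Your diagnosis that the plain $L^2$ energy estimate degrades to $O(\sqrt\eps)$ is correct and is precisely what this splitting circumvents; but the inference that boundary layers are therefore \emph{needed} is not. Your route is the classical one and should succeed in slab geometry (exponential decay of the Milne solution is known for this kernel despite the grazing velocities $v\to 0$), but it buys the same rate at substantially higher cost: one must verify well-posedness and decay of the half-space problems, control the additional residuals created by freezing $\sigma$ and the time dependence of $\bar u^1$ inside the layers, and carry out the weighted trace estimates you flag --- none of which the paper's $L^p$ argument requires.
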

%
%\subsection*{Outline of the proof.}
The verification of the above assertions relies on the a-priori bounds of Theorem~\ref{thm:2}, the extraction of convergent subsequences, a weak characterization of solutions to the limit system, and an extension of the arguments used in \cite{EggerSchlottbom12,EggerSchlottbom14_Lpestimates} for the analysis of the radiative transfer equation.  
%
%Let us note that, as a by-product of our analysis, we also obtain the existence of a weak solution to the nonlinear limit problem \eqref{eq:u0}--\eqref{eq:c0:ibc} on the time interval $[0,T]$. 
%
The detailed proof of Theorem~\ref{thm:3} is presented in Sections~\ref{sec:proof:thm:3a} and \ref{sec:proof:thm:3b}.

%%%%%%%%%%%%%%%%%%%%%%%%%%%%%%%%%%%%%%%%%%%%%%%%
\section{Proof of Theorem~\ref{thm:2}}
\label{sec:proof:thm:2}
%%%%%%%%%%%%%%%%%%%%%%%%%%%%%%%%%%%%%%%%%%%%%%%%

The existence of a unique solution to \eqref{eq:u}--\eqref{eq:c:ibc} will be proven by constructing a fixed-point of the mapping $\Phi : z \mapsto u$, where $(u,\bar c)$ solves the linearized equations 
\begin{alignat}{2}
\eps^2 \dt u + \eps v \dx u + \sigma (u - \bar u) &= \eps \alpha v \dx \bar c \, \bar z \qquad &&\text{in } \Q \times (0,T), \label{eq:u:lin}\\
\dt \bar c - D \dxx \bar c + \beta \bar c &= \gamma \bar z \qquad &&\text{in } \X \times (0,T), \label{eq:c:lin}
\end{alignat}
together with the boundary and initial conditions \eqref{eq:u:ibc}--\eqref{eq:c:ibc}, now required for $u$ and $\bar c$ instead of $u^\eps$ and $\ceps$.
Also recall that $\bar z = \frac{1}{2} \int_\V z(\cdot,v) \, dv$ denotes the velocity average.

Before we are able to prove the assertions of Theorem~\ref{thm:2}, we first collect some preliminary results about the two linear problems \eqref{eq:u:lin}--\eqref{eq:c:lin} defining $\bar c$ and $u$.
Throughout our proofs, we denote by $C_i$ and $C_i'$ constants that only depend on the problem data, in particular, the upper bounds $\eps^*$, $T^*$ for the asymptotic parameter and the time horizon. 

\subsection{Linearized diffusion equation}

By application of standard results for parabolic differential equations, we obtain the following statements. 
\begin{proposition} \label{pro:c}
Let Assumption~\ref{ass:1} hold. Then, for $z \in C^1([0,T];L^2(Q))$ with $z(0)=\bar u_0$, equation \eqref{eq:c:lin} has a unique solution $\bar c$, additionally satisfying the initial and boundary conditions $\bar c(0)=\bar c_0$ on $\X$ and $\bar c = \bar g_c$ on $\partial\X \times (0,T)$. Moreover, for any $p<\infty$,
\begin{align*}
\|\bar c\|_{W^{1,p}(0,T;L^2(\X))} + \|\bar c\|_{L^p(0,T;H^2(\X))} &\le C_1 + C_2 \|\bar z\|_{L^p(0,T;L^2(\X))} \\
\|\dt \bar c\|_{W^{1,p}(0,T;L^2(\X))} + \|\dt \bar c\|_{L^{p}(0,T;H^2(\X))} &\le C_1 + C_2 \|\dt \bar z\|_{L^p(0,T;L^2(\X))} 
\end{align*}
with $C_1,C_2$ depending only on $p$ and the bounds for the problem data in Assumption~\ref{ass:1}.
\end{proposition}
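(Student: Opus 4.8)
The plan is to treat \eqref{eq:c:lin} as a linear parabolic Dirichlet problem for $\bar c$ on $\X=(0,\ell)$ and to invoke maximal parabolic $L^p$-regularity twice: once for $\bar c$ itself and once for its time derivative $\dt\bar c$. Throughout I would use that velocity averaging is bounded from $L^2(\Q)$ to $L^2(\X)$, so that $z\in C^1([0,T];L^2(\Q))$ yields $\bar z\in C^1([0,T];L^2(\X))$ and hence $\bar z,\dt\bar z\in L^p(0,T;L^2(\X))$ for every $p<\infty$; this also makes the right-hand sides of both estimates finite and thereby gives the claimed memberships once the bounds are established.

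First I would homogenize the boundary data. Setting $w=\bar c-\bar g_c$ with the extension $\bar g_c\in W^{2,\infty}(0,T^*;W^{2,\infty}(\X))$ from Assumption~\ref{ass:1}, the function $w$ satisfies homogeneous Dirichlet conditions and solves $\dt w - D\dxx w + \beta w = \gamma\bar z - (\dt\bar g_c - D\dxx\bar g_c + \beta\bar g_c)=:F$, where $F\in L^p(0,T;L^2(\X))$ with $\|F\|_{L^p(0,T;L^2(\X))}\le C_1+C_2\|\bar z\|_{L^p(0,T;L^2(\X))}$, the $\bar g_c$-contributions being absorbed into $C_1$. The initial value $w(0)=\bar c_0-\bar g_c(0)$ lies in $H^2(\X)$ since $\bar c_0\in H^4(\X)$, and it vanishes on $\partial\X$ by the compatibility condition $\bar g_c(0)=\bar c_0$; hence $w(0)\in H^2(\X)\cap H_0^1(\X)=D(A)$ for the operator $A=-D\dxx+\beta$ with homogeneous Dirichlet data, which (as $D>0$, $\beta\ge 0$) generates an analytic semigroup on $L^2(\X)$. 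Since $D(A)$ embeds continuously into the trace/interpolation space governing $W^{1,p}(0,T;L^2(\X))\cap L^p(0,T;H^2(\X))$, standard maximal regularity gives existence, uniqueness, and $\|w\|_{W^{1,p}(0,T;L^2(\X))}+\|w\|_{L^p(0,T;H^2(\X))}\le C\bigl(\|F\|_{L^p(0,T;L^2(\X))}+\|w(0)\|_{H^2(\X)}\bigr)$. Adding back $\bar g_c$ and inserting the bound for $F$ yields the first inequality.

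The second inequality I would obtain by differentiating the equation in time. Writing $c_1=\dt\bar c$, one finds that $c_1$ solves the same parabolic equation with forcing $\gamma\dt\bar z$ and boundary data $\dt\bar g_c$ on $\partial\X$. Its initial value is read off from \eqref{eq:c:lin} at $t=0$, namely $c_1(0)=D\dxx\bar c_0-\beta\bar c_0+\gamma\bar z(0)$; since $z(0)=\bar u_0$ gives $\bar z(0)=\bar u_0$, this equals $D\dxx\bar c_0-\beta\bar c_0+\gamma\bar u_0$, which lies in $H^2(\X)$ precisely because $\bar c_0\in H^4(\X)$ and $\bar u_0\in H^2(\X)$, and which coincides with $\dt\bar g_c(0)$ on $\partial\X$ by the third compatibility condition in Assumption~\ref{ass:1}. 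Thus the time-differentiated problem again has compatible data, so repeating the homogenization and maximal-regularity argument with $\dt\bar z$ in place of $\bar z$ produces the second inequality.

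I expect the main obstacle to be the rigorous justification of the time differentiation, i.e.\ establishing that $\dt\bar c$ is genuinely a strong solution with initial value $c_1(0)$ above rather than a merely formal manipulation. This is exactly where the third compatibility condition and the regularity $\bar c_0\in H^4(\X)$, $\bar u_0\in H^2(\X)$ enter: they guarantee that, after subtracting $\dt\bar g_c$, the initial datum $c_1(0)-\dt\bar g_c(0)$ again belongs to $H^2(\X)\cap H_0^1(\X)$ and hence to the trace space. One clean way to make this fully rigorous is to first solve for $\bar c$ in a weaker class, then apply the estimate to the difference quotients $\tfrac{1}{\tau}(\bar c(\cdot+\tau)-\bar c)$ and pass to the limit, using that the data $F$, $\bar g_c$ and $c_1(0)$ are admissible in the limit. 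The remaining work, namely splitting the $\bar g_c$-dependent terms into the constant $C_1$ and keeping the $\bar z$-dependence linear in $C_2$, is routine bookkeeping.
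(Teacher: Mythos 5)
Your proof is correct and takes essentially the same route as the paper: homogenize via $\bar d = \bar c - \bar g_c$, invoke maximal parabolic $L^p$-regularity for the resulting Dirichlet problem, then formally differentiate in time, using the third compatibility condition together with $\bar c_0 \in H^4(\X)$, $\bar u_0 \in H^2(\X)$ and $z(0)=\bar u_0$ to verify that the differentiated problem again has admissible data. The only cosmetic difference is the $D\dxx\bar g_c$ term in your source $F$, which vanishes anyway since the paper takes the extension of $\bar g_c$ linear in $x$; your extra remarks on the trace space for the initial datum and on justifying the differentiation via difference quotients are sound and merely make explicit what the paper leaves implicit.
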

\begin{proof}
The function $\bar d =  \bar c - \bar g_c$ satisfies 
\begin{alignat}{2}
\dt \bar d - D \dxx \bar d + \beta \bar d &= \bar f  \qquad && \text{in } \X \times (0,T), \label{eq:lin:c1}\\
\bar d(0) &= \bar d_0 \qquad && \text{on } \X, \label{eq:lin:c2}
\end{alignat}
with $\bar f = \gamma \bar z - \dt \bar g_c - \beta \bar g_c$, $\bar d_0 = \bar c_0 - \bar g_c(0)$, and  
$\bar d=0$ on $\partial\X \times (0,T)$. 
By the conditions of Assumption~\ref{ass:1}, we can verify that $\bar f \in L^p(0,T;L^2(\X))$ and $\bar d_0 \in H_0^1(\X)\cap H^2(\X)$. 
Existence of a unique solution $\bar c \in W^{1,p}(0,T;L^2(\X)) \cap L^p(0,T;H^2(\X))$ and the first bound then follow from known maximal parabolic regularity of the heat equation; see \cite{Augner2024,Dore93}.
% \kh{I found this \cite{Augner2024} an alternative reference for Maximal regularity of the heat equation - it has a very direct statement of $L^p-L^q$ MaxReg for the Dirichlet BVP for the heat equation in the introduction. }
%see \cite[Chapter~7, Thm.~3 and 5]{Evans10}. 
%
By formally differentiating \eqref{eq:c:lin} in time, we see that $\bar d =  \dt \bar c - \dt \bar g_c$ also satisfies \eqref{eq:lin:c1}--\eqref{eq:lin:c2}, now with $\bar f = \gamma \dt \bar z - \dtt \bar g_c - \beta \dt \bar g_c$ and $\bar d_0= D \dxx \bar c_0 - \beta \bar c_0 + \gamma\bar u_0 - \dt \bar g_c(0)$. 
By Assumption~\ref{ass:1} and the regularity of $\bar z$, we can bound $\bar d_0 \in H^1_0(\X)\cap H^2(\X)$ and $\bar f \in L^p(0,T;L^2(\X))$, which leads to the estimate for the time derivatives. 
\end{proof}

\subsection{Linearized kinetic equation}

As a next step, we establish existence of a unique solution to~\eqref{eq:u:lin} with appropriate boundary and initial conditions, and $\bar c$, $\bar z$ assumed given.
\begin{proposition} \label{pro:u}
Let Assumption~\ref{ass:1} hold. Then for any $z \in C^1([0,T];L^2(\Q))$ and $\bar c$ as given by Proposition~\ref{pro:c}, equation \eqref{eq:u:lin} has a unique solution $u \in C^1([0,T];L^2(\Q))$ with derivative $v \dx u \in C^0([0,T];L^2(\Q))$, satisfying $u(0)=\bar u_0$ on $\Q$ and $u=\bar g_u$ on $\Gamma^{in} \times (0,T)$.
\end{proposition}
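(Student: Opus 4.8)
The plan is to cast \eqref{eq:u:lin} as an abstract evolution equation on $L^2(\Q)$ driven by an m-accretive operator and to invoke semigroup theory. First I would homogenise the data: since the inflow and initial data are the velocity-independent functions $\bar g_u$ and $\bar u_0$, I set $w = u - \bar g_u$, where $\bar g_u$ denotes the linear-in-$x$ extension from Assumption~\ref{ass:1}. Because $\bar g_u$ does not depend on $v$, the scattering term annihilates it, $\sigma(\bar g_u - \overline{\bar g_u}) = 0$, so $w$ solves an equation of the same structure with homogeneous inflow condition $w = 0$ on $\Gamma^{in}$ and modified source $f = \eps\alpha v\dx\bar c\,\bar z - \eps v\dx\bar g_u - \eps^2\dt\bar g_u$. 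The boundary part of the compatibility condition $\bar g_u(0) = \bar u_0$ guarantees that $w(0) = \bar u_0 - \bar g_u(0)$ vanishes on $\Gamma^{in}$, and since $\bar u_0 \in H^2(\X)$ this initial datum lies in the operator domain introduced next.

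Next I would define the transport–scattering operator $\A u = \eps v\dx u + \sigma(u - \bar u)$ on the domain $D(\A) = \{ u \in L^2(\Q) : v\dx u \in L^2(\Q),\ u|_{\Gamma^{in}} = 0 \}$ and show that $\eps^{-2}\A$ is m-accretive. Accretivity follows from the integration-by-parts identity \eqref{eq:ibp}: with $u = 0$ on $\Gamma^{in}$ one obtains $\l \eps v\dx u, u\r = \tfrac{\eps}{2}\int_{\Gamma^{out}} u^2 |v|\,d\Gamma \ge 0$, while the scattering contribution is nonnegative because $\int_\V (u - \bar u)u\,dv = \int_\V (u - \bar u)^2\,dv \ge 0$ and $\sigma \ge \sigma_{min} > 0$. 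The maximality (range) condition then amounts to solving, for $\lambda > 0$ and $f \in L^2(\Q)$, the stationary transport problem $\lambda\eps^2 u + \eps v\dx u + \sigma(u - \bar u) = f$ subject to $u = 0$ on $\Gamma^{in}$.

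Solving this stationary problem is the crux, and it is precisely here that the vanishing velocities and the boundary traces cause trouble: the characteristic field $v\dx$ degenerates as $v \to 0$, so one cannot integrate along characteristics uniformly in $v$, and the traces in $L^2(\Gamma;|v|\,d\Gamma)$ must be handled with care. Following the radiative-transfer approach of \cite{EggerSchlottbom14,Dautray93}, I would split $u$ into its even and odd parts in $v$, use that $v\dx$ interchanges the two parities, and recast the problem as a coercive variational problem for the even part; the zeroth-order terms $\lambda\eps^2 u + \sigma u$ provide coercivity uniformly down to $v = 0$, and the outflow boundary term yields a nonnegative Robin-type contribution, so Lax–Milgram applies and produces $u \in D(\A)$. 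This establishes that $\eps^{-2}\A$ is m-accretive, and since $D(\A)$ is dense, the Lumer–Phillips theorem shows that $-\eps^{-2}\A$ generates a contraction semigroup on $L^2(\Q)$.

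Finally I would read off the asserted regularity from semigroup theory. Writing the homogenised equation as $\dt w = -\eps^{-2}\A w + \eps^{-2} f$ with $w(0) \in D(\A)$, it remains to check that the source is regular enough in time for the mild solution to be classical. Using $z \in C^1([0,T];L^2(\Q))$, the bounds of Proposition~\ref{pro:c} (which give $\dx\bar c, \dt\dx\bar c \in L^p(0,T;L^\infty(\X))$ via the one-dimensional embedding $H^1(\X)\hookrightarrow L^\infty(\X)$), and the smoothness of $\bar g_u$, one verifies that $f \in W^{1,1}(0,T;L^2(\Q))$; together with $w(0) \in D(\A)$ this suffices for the mild solution to be classical, so $w \in C^1([0,T];L^2(\Q))$ with $w(t) \in D(\A)$ for all $t$, i.e. $v\dx w \in C^0([0,T];L^2(\Q))$ (note that $\sigma(\,\cdot - \overline{\,\cdot\,})$ is bounded on $L^2(\Q)$, so continuity of $\A w$ is equivalent to continuity of $v\dx w$). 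Undoing the substitution $u = w + \bar g_u$ restores the inflow and initial conditions and yields the stated solution, while uniqueness follows from accretivity. I expect the stationary range condition, with its degenerate transport at $v = 0$ and its boundary traces, to be the main obstacle.
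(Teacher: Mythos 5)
Your proposal is correct and follows essentially the same route as the paper: homogenisation via $w = u - \bar g_u$, dissipativity of the transport--scattering operator on the inflow-trace domain $D(\A^\eps)$ via \eqref{eq:ibp}, Lumer--Phillips, and a classical solution from $w(0) \in D(\A^\eps)$ together with $f \in W^{1,1}(0,T;L^2(\Q))$. The only deviation is that for the range (maximality) condition you sketch a self-contained even--odd parity/Lax--Milgram argument, where the paper simply cites \cite[Thm.~3.5]{Agoshkov98}; both are standard and substantively equivalent.
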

\begin{proof}
Similar to before, we define $w =  u - \bar g_u$, which can be seen to satisfy 
\begin{alignat}{2}
\eps^2 \dt w + \eps v \dx w + \sigma (w - \bar w) &= \eps v \bar f_1 + \eps^2 \bar f_2 \qquad && \text{in } \Q \times (0,T) \label{eq:w1}\\
w(0) &= \bar w_0 \qquad && \text{on } \Q,  \label{eq:w2}
\end{alignat}
with $\bar f_1 =  \alpha \dx \bar c \, \bar z - \dx \bar g_u$, $\bar f_2= -\dt \bar g_u$, $\bar w_0= \bar u_0 - \bar g_u(0)$, and homogeneous inflow boundary conditions $w = 0$ on $\Gamma^{in} \times (0,T)$. 
This can be written as an abstract Cauchy problem
\begin{align}
\dt w + \A^\eps w &= f, \qquad  t>0, \label{eq:abs:1}\\
w(0) &= \bar w_0, \label{eq:abs:2}
\end{align}
on the Hilbert space $\H = L^2(\Q)$, with right hand side $f= \frac{1}{\eps} v \bar f_1 + \bar f_2$, and densely defined linear operator $\A^\eps : D(\A^\eps) \subset \H \to \H$, $\A^\eps w =  \frac{1}{\eps} v \dx w + \frac{\sigma}{\eps^2} (w-\bar w)$ with domain 
\begin{align*}
D(\A^\eps)=\{w \in L^2(Q) : v \dx w \in L^2(\Q), \ w|_{\Gamma^{in}}=0\}.
\end{align*}
We note that $D(\A^\eps)\subset\H$ is dense and by \eqref{eq:ibp}, we see that 
\begin{align*}
    \langle \A^\eps w,w\rangle_\Q = \frac{1}{\eps}\langle v\dx w,w\rangle_\Q + \frac{1}{\eps^2}\langle\sigma (w-\bar w),w\rangle_\Q \geq 0,
\end{align*}
for all $w \in D(\A^\eps)$, where $\langle a,b\rangle_\Q=\int_\Q a b \, d(x,v)$; this shows that $-\A^\eps$ is dissipative~\cite[Prop. 2.4.2]{Cazenave_Heraux}. From~\cite[Thm.~3.5]{Agoshkov98}, we deduce that for any $\lambda>0$ and $f \in L^2(\Q)$, the equation $\A^\eps w +\lambda w = f$ has a unique solution $w\in D(\A^\eps)$, and conclude that $-\A^\eps$ is m-dissipative. 
Hence $-\A^\eps$ generates a strongly continuous semigroup of contractions~\cite[Thm.~3.4.4]{Cazenave_Heraux}.
From our assumptions, we further see that $\bar w_0 \in D(\A^\eps)$ and  $f \in W^{1,1}(0,T;\H)$, which implies the existence of a unique classical solution $w \in C^1([0,T];\H) \cap C^0([0,T];D(\A^\eps))$ to \eqref{eq:abs:1}--\eqref{eq:abs:2}; see e.g. \cite[Prop. 4.1.6]{Cazenave_Heraux}. 
%
% \textcolor{purple}{MS: Check this: Don't we need $f\in C^1$? If $f\in W^{1,1}$, then strong solution. Give a precise reference. Given our assumptions $\bar f_1 \in C^1([0,T];L^2(\X))$, $\bar f_2\in H^1(0,T;H^2(\X))\hookrightarrow C^{0,1/2}([0,T];H^2(\X))$}
The function $u = w + \bar g_u$ then is a sufficiently regular solution of \eqref{eq:u:lin} with the required initial and boundary conditions. 
Uniqueness follows from the linearity of the problem. 
\end{proof}
\begin{remark}
Since $-\A^\eps$ generates a semigroup of contractions, we also  obtain the bounds
\begin{align*}
\|w\|_{C([0,T];L^2(Q))} \leq \|\bar w_0\|_{L^2(\Q)} + \|f\|_{L^1(0,T;L^2(\Q))}.
\end{align*}
Inserting $f=\frac{1}{\eps}v\bar f_1 + \bar f_2$, we see that this simple estimate is, however, not uniform in $\eps$. A refined analysis is thus required to obtain sharper a-priori bounds.
\end{remark}

\subsection{A-priori bounds for linearized equations}
As a next step, we derive additional bounds for the solution $(u,c)$ of \eqref{eq:u:lin}--\eqref{eq:c:lin} making explicit the dependence on $\eps$. 
To do so, we assume $z \in C^1([0,T];L^2(\Q))$ is given with $z(0)=\bar u_0$ and we define the weighted norm
\begin{align} \label{eq:z:bound}
\|z\|_{\eps,T}^2 =  \|z\|_{L^\infty(0,T;L^2(\Q))}^2 + \eps^2 \|\dt z\|_{L^\infty(0,T;L^2(\Q))}^2 \le C_z^2.
\end{align}
By careful application of the estimates of Proposition~\ref{pro:c} and additional energy estimates for the linearized kinetic equation \eqref{eq:u:lin}, we obtain the following assertions. 
\begin{proposition} \label{pro:ubound}
Let $z \in C^1([0,T];L^2(\Q))$ be given with $z(0)=\bar u_0$ and bounded by \eqref{eq:z:bound}. 
%
% Then the solution $\bar c$ to \eqref{eq:c:lin} provided by Proposition~\ref{pro:c} satisfies 
% \begin{align}
% \|\bar c\|_{L^2(0,T;H^2(\X))} + \|\bar c\|_{H^1(0,T;L^2(\X)} &\le C \\
% \|\dt \bar c\|_{L^2(0,T;H^2(\X))} + \|\dt \bar c\|_{H^1(0,T;L^2(\X)} &\le C \eps^{-1} 
% \end{align}
% Furthermore, the solution of $u$ of \eqref{eq:u:lin} provided by Proposition~\ref{pro:u} satisfies 
% \begin{align}
% *** 
% \end{align}
% The constant $C$ in these estimates is independent of $\eps$.
Then the solution $(\bar u,\bar c)$ of \eqref{eq:u:lin}--\eqref{eq:c:lin} satisfies the bounds of Theorem~\ref{thm:2} with a constant $C$ that only depends on the bounds for the problem data in Assumption~\ref{ass:1} and $C_z$ in \eqref{eq:z:bound}.
\end{proposition}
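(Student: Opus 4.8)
The plan is to establish the eight a-priori bounds $(a)$--$(h)$ of Theorem~\ref{thm:2} for the solution $(u,\bar c)$ of the \emph{linearized} system, relying on the weighted bound \eqref{eq:z:bound} for $z$. The key observation is that the parabolic bounds come essentially for free from Proposition~\ref{pro:c}, while the kinetic bounds require energy estimates that exploit the dissipative structure of the collision operator $\sigma(u-\bar u)$. First I would dispatch the diffusion estimates: applying Proposition~\ref{pro:c} with $p=2$ (or $p=4$ as needed for $(h)$), the assumption $\|\bar z\|_{L^\infty(0,T;L^2(\X))} \le \|z\|_{\eps,T} \le C_z$ immediately bounds $\|\bar c\|_{W^{1,2}(0,T;L^2(\X))} + \|\bar c\|_{L^2(0,T;H^2(\X))}$, giving $(f)$ and $(g)$. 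For $(h)$, I would use the time-derivative estimate of Proposition~\ref{pro:c} together with the bound $\eps\|\dt\bar z\|_{L^\infty(0,T;L^2(\X))}\le C_z$, then invoke the Sobolev embedding $H^2(\X)\hookrightarrow W^{1,\infty}(\X)$ in one dimension to pass from $H^2$ to $W^{1,\infty}$; tracking the factor $\eps$ carefully is what yields the uniform right-hand side.

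The core of the proof is the kinetic energy estimate. I would test the equation \eqref{eq:u:lin} (equivalently \eqref{eq:w1} for $w = u - \bar g_u$) against $w$ itself in $L^2(\Q)$. Using the integration-by-parts identity \eqref{eq:ibp}, the transport term $\eps\langle v\dx w, w\rangle_\Q$ becomes the boundary contribution $\tfrac{\eps}{2}\int_{\Gamma^{out}} w^2 |v|\,d\Gamma$ (the inflow term vanishes since $w=0$ on $\Gamma^{in}$), and the collision term gives the crucial dissipation $\tfrac{1}{\eps^2}\langle \sigma(w-\bar w), w\rangle_\Q = \tfrac{1}{\eps^2}\langle\sigma(w-\bar w), w-\bar w\rangle_\Q \ge \tfrac{\sigma_{\min}}{\eps^2}\|w-\bar w\|_{L^2(\Q)}^2$, where I have used that averaging against the $v$-independent part annihilates $w-\bar w$. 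This produces the differential inequality
\begin{align*}
\frac{\eps^2}{2}\ddt\|w\|_{L^2(\Q)}^2 + \frac{\eps}{2}\|w\|_{L^2(\Gamma^{out};|v|d\Gamma)}^2 + \frac{\sigma_{\min}}{\eps^2}\|w-\bar w\|_{L^2(\Q)}^2 \le \langle \eps v\bar f_1 + \eps^2\bar f_2,\, w\rangle_\Q.
\end{align*}
The right-hand side must be controlled so that the $\eps$-powers balance correctly. The term $\eps\langle v\bar f_1, w\rangle_\Q$ is the delicate one: since $v\bar f_1$ is odd-in-$v$ against the even part, I would split $w = \bar w + (w-\bar w)$, note that $\langle v\bar f_1, \bar w\rangle_\Q = 0$ because $\int_\V v\,dv = 0$, and estimate $\eps\langle v\bar f_1, w-\bar w\rangle_\Q \le \tfrac{\sigma_{\min}}{2\eps^2}\|w-\bar w\|_{L^2(\Q)}^2 + C\eps^4\|\bar f_1\|_{L^2}^2$ by Young's inequality, absorbing the first piece into the dissipation. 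After Gronwall's lemma and integration in time, this simultaneously delivers $(a)$ (boundedness of $\|u\|_{L^\infty L^2}$), $(b)$ (the $O(\eps)$ bound on $\|u-\bar u\|_{L^2L^2}$ from the dissipation term), and $(c)$ (the $O(\sqrt\eps)$ boundary trace bound from the outflow term).

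The remaining bounds $(d)$ and $(e)$ concern the time and transport derivatives. For $(d)$, I would differentiate \eqref{eq:w1} in time and run the same energy estimate on $\dt w$, using the compatibility and regularity of the data (in particular $\bar w_0 \in D(\A^\eps)$ and $f\in W^{1,1}(0,T;\H)$ from Proposition~\ref{pro:u}) to control the initial data $\dt w(0)$; the factor $\eps$ in $\|\eps\dt u\|$ matches the $\eps^2\dt$ scaling of the equation. For $(e)$, having $\|u-\bar u\|$ and $\|\eps\dt u\|$ under control, I would read off $\eps v\dx u = \eps\alpha v\dx\bar c\,\bar z - \eps^2\dt u - \sigma(u-\bar u)$ directly from \eqref{eq:u:lin}, bounding each right-hand term: the source uses $\|\dx\bar c\|$ from $(g)$ and $\|\bar z\|\le C_z$, the time term uses $(d)$, and the collision term uses $(b)$, yielding $\|v\dx u\|_{L^2L^2}\le C$ after dividing by $\eps$. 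The main obstacle I anticipate is \textbf{tracking the precise powers of $\eps$} through every term so that the dissipation $\tfrac{\sigma_{\min}}{\eps^2}\|w-\bar w\|^2$ is strong enough to absorb the $\eps$-weighted forcing while still leaving uniform (or correctly $\eps$-scaled) bounds on the left; the odd/even velocity splitting that kills the leading-order coupling $\langle v\bar f_1,\bar w\rangle_\Q$ is essential, and getting the differentiated estimate for $(d)$ to close requires care with the compatibility conditions of Assumption~\ref{ass:1}.
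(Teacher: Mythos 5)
Your route is the paper's route: bounds (f)--(h) from Proposition~\ref{pro:c} together with the one-dimensional embedding $H^2(\X)\hookrightarrow W^{1,\infty}(\X)$; a weighted energy estimate for $w=u-\bar g_u$ exploiting the cancellation $\langle v\bar f_1,\bar w\rangle_\Q=0$ and absorption of the collision dissipation, followed by Gr\"onwall, for (a)--(c); time differentiation plus the same energy estimate for (d); and rearranging \eqref{eq:w1} for (e). Two steps, however, need repair.

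First, your central display mixes two normalizations and is false as written. Testing \eqref{eq:w1} with $w$ gives $\frac{\eps^2}{2}\ddt\|w\|^2_{L^2(\Q)}+\frac{\eps}{2}\|w\|^2_{L^2(\Gamma^{out};|v|d\Gamma)}+\langle\sigma(w-\bar w),w-\bar w\rangle_\Q$ on the left: the dissipation carries \emph{no} factor $\eps^{-2}$. The factor $\eps^{-2}$ appears only after dividing the whole identity by $\eps^2$ (the paper tests with $w/\eps^2$), in which case the time term is $\frac{1}{2}\ddt\|w\|^2_{L^2(\Q)}$ and the Young residual after absorbing half the dissipation is the $\eps$-free quantity $\frac{1}{2\sigma_{min}}\|\bar f_1\|^2_{L^2(\X)}$, not $C\eps^4\|\bar f_1\|^2$. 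Taken literally, your inequality would produce $\|w-\bar w\|_{L^2(0,T;L^2(\Q))}=O(\eps^2)$, which is stronger than the true bound (b) and signals the inconsistency; with the bookkeeping corrected one recovers exactly \eqref{eq:w:bound}, and (a)--(c) then follow as you claim (you must also, as the paper does, bound $\|\bar f_1\|^2_{L^2(L^2)}$ in terms of $C_z$ via $\|\dx\bar c\|_{L^4(L^\infty)}\|\bar z\|_{L^4(L^2)}$, using Proposition~\ref{pro:c} and \eqref{eq:z:bound}).

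Second, for (d) it is not enough to invoke compatibility abstractly: the differentiated estimate only closes if one verifies the size of $\dt w(0)$. Evaluating \eqref{eq:w1} at $t=0$ and using that $w(0)=\bar u_0-\bar g_u(0)$ is velocity-independent, the stiff term $\sigma(w(0)-\bar w(0))$ vanishes and one finds $\dt w(0)=\eps^{-1}v(\alpha\dx\bar c_0\,\bar u_0-\dx\bar u_0)-\dt\bar g_u(0)$, which is only $O(\eps^{-1})$, so that $\|\eps\,\dt w(0)\|_{L^2(\Q)}\le C$. Together with the fact that the differentiated forcing contains $\alpha\dx\bar c\,\dt\bar z$, of size $O(\eps^{-1})$ by \eqref{eq:z:bound}, this is precisely what yields $\|\eps\dt u\|_{C^0([0,T];L^2(\Q))}\le C$; were the initial data $v$-dependent, $\dt w(0)$ would be $O(\eps^{-2})$ and your argument for (d) — and hence for (e), which consumes (d) — would fail by a full power of $\eps$.
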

\begin{proof}
The estimates (f)--(h) of Theorem~\ref{thm:2} for the function $\bar c$ follow readily from Proposition~\ref{pro:c} .
To derive the improved bounds for $u$, we continue by testing \eqref{eq:w1} with $w/\eps^2$. 
%
%Since $w$ is a classical solution, all steps are well-defined. 
%
Using the integration-by-parts formula~\eqref{eq:ibp}, we see that 
\begin{align*}
\frac{1}{2} \frac{d}{dt} \|w\|^2_{L^2(\Q)} 
&+ \frac{1}{2\eps} \|w\|_{L^2(\Gamma^{out};|v| d\Gamma)}^2 + \frac{1}{\eps^2} \|\sqrt{\sigma}(w - \bar w)\|^2_{L^2(\Q)} \\
&= \frac{1}{\eps}( v \bar f_1,w - \bar w)_{L^2(\Q)} + (\bar f_2, w)_{L^2(\Q)} \\
&\le \frac{1}{2 \sigma_{min}} \|\bar f_1\|^2_{L^2(\X)} + \frac{1}{2 \eps^2} \|\sqrt{\sigma}(w - \bar w)\|^2_{L^2(\Q)} +  \|\bar f_2\|^2_{L^2(\X)} + \frac{1}{2} \|w\|^2_{L^2(\Q)}.
\end{align*}
The second term on the right hand side can be absorbed into the left hand side of this inequality. By application of Grönwall's inequality, we then immediately obtain
\begin{align} \label{eq:w:bound}
\|w\|_{L^\infty(0,T;L^2(\Q))}^2 &+ \frac{1}{\eps} \|w\|_{L^2(0,T;L^2(\Gamma^{out};|v| d\Gamma))}^2 + \frac{\sigma_{min}}{\eps^2} \|w - \bar w\|^2_{L^2(0,T;L^2(\Q))} \\
&\le e^T \Big(\|w(0)\|_{L^2(\Q)}^2 + \frac{1}{\sigma_{min}} \|\bar f_1\|_{L^2(0,T;L^2(\X))}^2 + 2\|\bar f_2\|^2_{L^2(0,T;L^2(\X))}\Big) \notag
\end{align}
From $w(0)=\bar u_0 - \bar g_u(0)$ and our assumptions, we see that $\|\bar w(0)\|_{L^2(\Q)} \le C_0$. 
Using Hölder and Young inequalities, we can further estimate 
\begin{align*}
% \|\bar f_1\|^2_{L^2(L^2)} 
% &\le 2 \alpha^2 \|\dx \bar c\|_{L^2(L^\infty)}^2 \|\bar z\|_{L^\infty(L^2)}^2 + 2 \|\dx \bar g_u\|^2_{L^2(L^2)} \le C_1.
\|\bar f_1\|^2_{L^2(0,T;L^2(\X))} 
&\le 2 \alpha^2 \|\dx \bar c\|_{L^4(0,T;L^\infty(\X))}^2 \|\bar z\|_{L^4(0,T;L^2(\X))}^2 + 2 \|\dx \bar g_u\|^2_{L^2(0,T;L^2(\X))} \\
%&\le  \new{(C_1'+C_1'' C_z^2) T^{1/2} C_z^2 + C_1} \\
&\le C_1 C_z^4 T^{1/2} + C_2.
%  C_1 T^{1/2} C_z^2 + C_1'.
%  C_1 T^{1/2} C_z^2 + C_1'.
\end{align*}
The last bound follows from previous estimates for $\bar c$, our assumptions on the problem data, and the estimate $\|a\|_{L^4(0,T)} \le T^{1/4} \|a\|_{L^\infty(0,T)}$, which again is a consequence of Hölder's inequality.
The definition of $\bar f_2$ implies that $\|\bar f_2\|_{L^2(0,T;L^2(\X))} = \|\dt \bar g_u\|_{L^2(0,T;L^2(\X))} \le C_3$.
Note that the constants $C_i$ can be chosen independently of $\eps$ and $T$.
%
% \kh{The constant $C_1$ also contains powers of $C_z$ and $T$: $\|\dx \bar c\|_{L^4_tL^\infty_x}\leq C_1+C_2T^{1/4}C_z$ by Prop. 4. Should they be shown/mentioned explicitly? Of course $T$ can be estimated by $T^\star$ and no dependence exists anymore. (same below in $\bar f_1'$ estimation and in 3.4 Step 1 equation)}
%
% \herbert{HE: Ja, genau so war das gemeint ... Ich hab nur die "T"s gelassen, die man später für die Kontraktion benötigt. }
%
The estimates (a)--(c) in Theorem~\ref{thm:2} for the function $u = \bar g_u + w$ then follow from the triangle inequality. 

For establishing the remaining bounds, we proceed as follows. By formal differentiation of \eqref{eq:w1}--\eqref{eq:w2}, we see that $w' = \dt w = \dt u - \dt \bar g_u$ is a mild solution of 
\begin{align*}
\eps^2 \dt w' + \eps v \dx w' + \sigma (w' - \bar w') &= \eps v \bar f_1' + \eps^2 \bar f_2', \\
w'(0) &= w'_0,
\end{align*}
with 
$\bar f_1' =  \alpha \dtx \bar c \bar z + \alpha \dx \bar c \dt \bar z - \dtx \bar g_u$, 
$\bar f_2' = -\dtt \bar g_u$,
and $w_0' = \eps^{-1} v (\alpha \dx \bar c_0 \bar u_0 - \dx\bar u_0) - \dt \bar g_u(0)$. 
By our assumptions, we have
$\|\eps w_0'\|_{L^2(\Q)} \le C_0$ and $
\|\bar f_2'\|_{L^2(L^2)}^2 = \|\dtt \bar g_u\|_{L^2(L^2)}^2 \le C_3$. 
By similar arguments as used before, we can further show that
\begin{align*}
\|\bar f_1'\|_{L^2(L^2)}^2 
&\le 3\alpha^2 (\|\dtx \bar c\|_{L^4(L^\infty)}^2 \|\bar z\|_{L^4(L^2)}^2 +  \|\dx \bar c\|_{L^4(L^\infty)}^2 \|\dt \bar z\|_{L^4(L^2)}^2) + 3\|\dtx \bar g_u\|^2_{L^2(L^2)} \\
%&\new{\le (C_1'+C_1''\eps^{-2} + C_1'''T^{1/2}C_z^2\eps^{-2}) T^{1/2} C_z^2  + C_1}  \\
&\leq C_1 C_z^4 \eps^{-2} T^{1/2} + C_2.%\le C_1 T^{1/2} C_z^2 \eps^{-2} + C_1'.
\end{align*}
An application of the energy estimate \eqref{eq:w:bound} to $w'=\dt w$ %, which can be justified through an approximation process, 
then yields
$\|\eps \dt w\|^2_{L^\infty(L^2)} \le C$. 
By rearranging~\eqref{eq:w1} and using the previous bounds, we further obtain $\|v \dx w\|_{L^2(L^2(\Q))} \le C'$. 
The estimates (a)--(b) and (d)--(e) of Theorem~\ref{thm:2} for $u=w + \bar g_u$ then follow by the triangle inequality and the assumptions on the problem data. 
\end{proof}

\subsection{Existence of a unique solution}

As noted in the beginning of Section~\ref{sec:proof:thm:2}, we will use a fixed-point argument to establish existence of a solution to \eqref{eq:u}--\eqref{eq:c:ibc}. 
To this end, we denote by 
$\Phi : \SC_T \to C^1([0,T];L^2(\Q))$, $z \mapsto u$ the mapping that assigns to a given $z$ in
\begin{align}
\SC_T =  \{ z \in C^1([0,T];L^2(\Q)) : \|z\|_{\eps,T} \le C_z, \, z(0) = \bar u_0 \}
\end{align}
the solution $u$ of the linearized equations \eqref{eq:u:lin}--\eqref{eq:c:lin} with the required initial and boundary conditions. 
From Propositions \ref{pro:c} and \ref{pro:u}, we see that this mapping is well-defined. 
We will show now that $\Phi$ is a self-mapping and a contraction on $\SC_T$ whenever $C_z$ is chosen large enough
and $0<T\le T^*$ is chosen small enough. Note that choosing $C_z$ sufficiently large will also ensure that $\SC_T$ is non-empty. 

\subsection*{Step~1}
Let $z \in \SC_T$ and $(u,\bar c)$ denote the solution of \eqref{eq:u:lin}--\eqref{eq:c:lin}. Further set $w=u-\bar g_u$. Then from the estimates derived in the proof of Proposition~\ref{pro:ubound}, we see that 
\begin{align*}
\|u\|_{\eps,T}^2 \le 2\|\bar g_u\|_{\eps,T}^2 + 2\|w\|_{\eps,T}^2  
%&\le  \new{C_1 + (C_2 + C_2'T^{1/2}\|z\|_{\eps,T}^2)T^{1/2} \|z\|_{\eps,T}^2}  \\
&\le C_1 C_z^4 T^{1/2}+ C_2 
%\le C_1 + C_2 T^{1/2} \|z\|_{\eps,T}^2
\end{align*}
with constants $C_1$, $C_2$ that can be chosen independent of $\eps$ and $T$. 
%
% We may choose $C_z \ge \sqrt{C_1+C_2}$. % \sqrt{2 C_1}$. 
% Then for $T \le \min(1/C_z^8, %1/(4 C_2^2),
% T^*)$, we have $\|u\|_{\eps,T}^2 \le C_1+C_2 \le%C_z^2/2 + C_z^2/2 = 
% C_z^2$.
By choosing $C_z$ sufficiently large and $T$ sufficiently small, we obtain $\|u\|_{\eps,T}\leq C_z$.
Moreover, by construction we know that $u \in C^1([0,T];L^2(\Q))$ and it satisfies the required initial and boundary conditions. 
Hence $\Phi$ is a self-mapping on $\SC_T$ for this choice of $T$ and $C_z$.

\subsection*{Step~2}
Let $z_1,z_2 \in \SC_T$ be given and $(u_1,\bar c_1)$, $(u_2,\bar c_2)$ be the corresponding solutions of the system \eqref{eq:u:lin}--\eqref{eq:c:lin} with the required initial and boundary conditions.
Then the two functions $\bar d =  \bar c_1-\bar c_2$ and $w =  u_1 - u_2$ satisfy
\eqref{eq:lin:c1} and \eqref{eq:w1} with 
$\bar f= \gamma (\bar z_1 - \bar z_2)$, $\bar f_1= \alpha (\dx \bar c_1 \bar z_1 - \dx \bar c_2 \bar z_2)$, and $\bar f_2= 0$, and with homogeneous initial and boundary conditions. 
From the estimates of Proposition~\ref{pro:c} and the continuous embeddings of $H^1(\X) \to L^\infty(\X)$ and $L^\infty(0,T) \to L^4(0,T)$, we see that
\begin{align*}
\|\dx \bar d\|_{L^4(0,T;L^\infty(\X))} &\le C \|z_1 - z_2\|_{L^\infty(0,T;L^2(\Q))} \\
\|\dtx \bar d\|_{L^4(0,T;L^\infty(\X))} &\le C \|\dt z_1 - \dt z_2\|_{L^\infty(0,T;L^2(\Q))}
\end{align*}
with a uniform constant $C$ independent of $\eps$ and $T$.
Using the a-priori bounds on solutions of the linearized equations obtained so far, we can then further estimate
\begin{align*}
\|\bar f_1\|^2_{L^2(L^2)} 
&\le 2\alpha^2 (\|\dx \bar c_1 - \dx \bar c_2\|_{L^4(L^\infty)}^2 \|\bar z_1\|_{L^4(L^2)}^2 + \|\dx \bar c_2\|_{L^4(L^\infty)}^2 \|\bar z_1 - \bar z_2\|_{L^4(L^2)}^2) \\
&\le C T^{1/2} \|\bar z_1 - \bar z_2\|_{L^\infty(0,T;L^2)}^2,
\end{align*}
and in a similar manner, we obtain 
\begin{align*}
\|\dt \bar f_1\|^2_{L^2(L^2)}
&\le 4 \alpha^2 (\|\dtx (\bar c_1 - \bar c_2)\|^2_{L^4(L^\infty)} \|z_1\|^2_{L^4(L^2)} + \|\dx \bar c_2\|_{L^4(L^\infty)}^2 \|\dt (z_1-z_2)\|^2_{L^4(L^2)} \\
& \qquad \qquad  + \|\dtx \bar c_2\|^2_{L^4(L^\infty)} \|z_1 - z_2\|_{L^4(L^2)}^2 + \|\dx  (\bar c_1-\bar c_2)\|^2_{L^4(L^\infty)} \|\dt z_1\|^2_{L^4(L^2)} )\\
&\le C T^{1/2} (\|\dt (z_1-z_2)\|_{L^\infty(L^2)} + \eps^{-2} \|z_1 - z_2\|^2_{L^\infty(L^2)}).
\end{align*}
By combination of the estimates derived in the proof of Proposition~\ref{pro:ubound}, we thus obtain 
\begin{align*}
\|u_1-u_2\|_{\eps,T}^2 
\le C e^T T^{1/2} \|z_1 - z_2\|_{\eps,T}^2.
\end{align*}
We note that $C$ is again independent of $\eps$ and $T$. For $T < \min\{1/(C^2 e^{2 T^*}),T^*\}$, we further see that $L =  C e^{T} T^{1/2} < 1$. Hence $\Phi$ is a contraction on $\SC_T$ for this choice of $T$. 

\subsection*{Step~3.}
In summary, we have shown that $\Phi$ is a self-mapping and a contraction on $\SC_T$ whenever $C_z$ is chosen sufficiently large and $T$ sufficiently small. 
For $T>0$, the set $\SC_T$ is nonempty and closed. The existence of a unique fixed-point $u = \Phi(u)$ in $\SC_T$ then follows readily from Banach's fixed-point theorem. 
In addition, any fixed point of $\Phi$ also is a solution of \eqref{eq:u}--\eqref{eq:c:ibc}, and vice versa. 
This yields existence of a unique solution on $[0,T]$.

\subsection{A-priori bounds}
By construction, $(u,\bar c)$ also is a solution of the linearized equations \eqref{eq:u:lin}--\eqref{eq:c:lin} with $z=u$ and satisfying the required initial and boundary conditions. 
The bounds in Theorem~\ref{thm:2} then follow immediately  form the ones of Proposition~\ref{pro:c} and Proposition~\ref{pro:ubound}.
This concludes the proof of Theorem~\ref{thm:2}. \qed

\section{Proof of Theorem~\ref{thm:3}, Part~1}
\label{sec:proof:thm:3a}

We now show that solutions $(u^\eps,\ceps)$ of the kinetic chemotaxis model \eqref{eq:u}--\eqref{eq:c:ibc} converge in an appropriate sense to the unique solution $(\bar u^0,\co)$ of the Keller-Segel system \eqref{eq:u0}--\eqref{eq:c0:ibc}. 
We will use $\langle a,b\rangle_S = \int_S a \, b \, ds$ to denote various $L^2$-scalar products arising in our analysis.

\subsection{Keller-Segel system}
Let us begin with summarizing some properties about solutions to the limit system which are required later on. 
By weak solution, we mean a pair of functions $(\bar u^0,\co)$ such that
\begin{align}
&\bar u^0 \in L^2(0,T;H^1(\X)) \cap H^1(0,T;H^{-1}(\X)) \label{eq:u0:reg}\\
&\co \in L^2(0,T;H^2(\X)) \cap H^1(0,T;L^2(\X)) \label{eq:c0:reg},
\end{align}
which satisfies \eqref{eq:c0} pointwise a.e., \eqref{eq:u0:ibc}--\eqref{eq:c0:ibc} in a trace sense and \eqref{eq:u0} in a weak sense, i.e., 
\begin{align} \label{eq:u0:weak}
\langle \dt \bar u^0,\bar \phi\rangle_\X + \langle \bar \mu \dx \bar u^0 - \bar \chi \dx \bar c^0 \bar u^0, \dx \bar \phi\rangle_\X  = 0
\end{align}
for all $\bar \phi \in H_0^1(\X)$ and for a.a. $t \in (0,T)$. Let us note that the solution components $\bar u^0$, $\co$ here depend on time, while the test function is independent of time. 
\begin{proposition} \label{pro:u0c0}
Let Assumption~\ref{ass:1} be valid. Then \eqref{eq:u0}--\eqref{eq:c0:ibc} has a unique weak solution.
% \begin{align}
% &\bar u^0 \in L^2(0,T;H^1(\X)) \cap H^1(0,T;H^{-1}(\X)) \label{eq:u0:reg}\\
% &\co \in L^2(0,T;H^2(\X) \cap H^1(0,T;L^2(\X)) \label{eq:c0:reg}.
% \end{align}
% By weak solution, we mean a pair of functions $(\bar u^0,\co)$ with the given regularity 
% which satisfies \eqref{eq:c0} in pointwise sense a.e., \eqref{eq:u0:ibc}--\eqref{eq:c0:ibc} in a trace sense and \eqref{eq:u0} in a weak sense, i.e., 
% \begin{align} \label{eq:u0:weak}
% \langle \dt \bar u^0,\bar \phi\rangle_\X + \langle \bar \mu \dx \bar u^0 - \bar \chi \dx \bar c^0 \bar u^0, \dx \bar \phi\rangle_\X  = \langle \gamma \bar u^0,\bar \phi \rangle_\X 
% \end{align}
% for all $\bar \phi \in L^2(0,T;H_0^1(\X))$ and for a.a. $t \in (0,T)$. Let us note that the solution components $\bar u^0$, $\co$ here depend on time, while the test function is independent of time.
%
%\herbert{Check: Moreover, $\co \in L^p(0,T;W^{2,q}(\X) \cap W^{1,p}(0,T;L^q(\X))$ for any $1 \le p,q < \infty$. }
%
If, in addition, $\sigma \in W^{1,\infty}(\X)$% and $\bar u_0 \in H^2(\X)$
, then $\bar \mu,\bar \chi \in W^{1,\infty}(\X)$ and
\begin{align*}
\bar u^0 &\in H^1(0,T;H^1(\X)) \cap L^\infty(0,T;H^2(\X)) .
\end{align*}
\end{proposition}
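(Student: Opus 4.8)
The plan is to decouple the two equations and recouple them by a fixed-point argument, in the spirit of the proof of Theorem~\ref{thm:2}. For given $\uogbar$, equation~\eqref{eq:c0} is a linear parabolic problem for $\co$, whose unique solvability, the regularity $\co \in L^2(0,T;H^2(\X)) \cap H^1(0,T;L^2(\X))$, and a bound $\|\co\|_{L^2(0,T;H^2(\X))} \le C(1+\|\uogbar\|_{L^2(0,T;L^2(\X))})$ are provided by the maximal parabolic regularity underlying Proposition~\ref{pro:c}. For given $\co$, equation~\eqref{eq:u0} is a linear advection--diffusion problem for $\uogbar$; since $\bar\mu = \tfrac13\sigma^{-1} \ge (3\sigma_{max})^{-1} > 0$ it is uniformly coercive, and the drift $\bar\chi\dx\co = \alpha\bar\mu\dx\co$ lies in $L^2(0,T;L^\infty(\X))$ because $\co \in L^2(0,T;H^2(\X)) \hookrightarrow L^2(0,T;W^{1,\infty}(\X))$ in one space dimension. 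After homogenising the boundary datum $\bar g_u$ as in the proof of Proposition~\ref{pro:c}, a standard Galerkin argument yields a unique weak solution $\uogbar \in L^2(0,T;H^1(\X)) \cap H^1(0,T;H^{-1}(\X))$ in the sense of \eqref{eq:u0:reg}, \eqref{eq:u0:weak}. I would assemble these two solution operators into a map $\Psi:\uogbar \mapsto \co \mapsto \uogbar$ and look for its fixed point.

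For the local analysis I would test the $\uogbar$-equation with $\uogbar$, use $\bar\mu \ge (3\sigma_{max})^{-1}$ and Young's inequality to absorb $\tfrac12(3\sigma_{max})^{-1}\|\dx\uogbar\|_{L^2(\X)}^2$ into the left-hand side, and retain a remainder $C\|\dx\co\|_{L^\infty(\X)}^2\|\uogbar\|_{L^2(\X)}^2$, where $\|\dx\co\|_{L^\infty(\X)}^2 \le C\|\co\|_{H^2(\X)}^2 \in L^1(0,T)$. Applied to the difference of two iterates, which solves the same system with vanishing data and right-hand sides controlled by the differences of the inputs, this estimate together with a factor $T^{1/2}$ from Hölder's inequality in time, exactly as in Step~2 of Section~\ref{sec:proof:thm:2}, shows that $\Psi$ is a contraction on a short interval. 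Banach's theorem then gives a unique weak solution locally, and uniqueness on the full interval follows by applying the same Grönwall argument directly to the difference of two solutions. To reach the full time horizon, the local solution must be continued by means of a-priori bounds that remain finite on $[0,T]$; this is where the one-dimensional setting is essential, since the chemotactic drift has the aggregating sign and a naive energy estimate controls $\uogbar$ only up to a possible finite-time blow-up. The global bound is obtained from the a-priori estimates available for parabolic--parabolic Keller--Segel systems in one space dimension (for instance a Gagliardo--Nirenberg argument closing an $L^2$- or $L^\infty$-bound for $\uogbar$ that is unavailable in higher dimensions).

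For the additional regularity, $\sigma \in W^{1,\infty}(\X)$ with $\sigma \ge \sigma_{min} > 0$ immediately gives $\bar\mu = \tfrac13\sigma^{-1} \in W^{1,\infty}(\X)$ and $\bar\chi = \alpha\bar\mu \in W^{1,\infty}(\X)$, so the coefficients of \eqref{eq:u0} are now admissible for elliptic regularity. I would bootstrap in two steps. First, energy estimates for the time-differentiated equation $\dt p - \dx(\bar\mu\dx p - \bar\chi\dx\co\,p) = -\dx(\bar\chi\,\dtx\co\,\uogbar)$ with $p = \dt\uogbar$, carried out on the Galerkin approximation to avoid a regularity circularity and using the regularity of the data together with the compatibility conditions of Assumption~\ref{ass:1}, yield $\dt\uogbar \in L^\infty(0,T;L^2(\X)) \cap L^2(0,T;H^1(\X))$; combined with $\uogbar \in L^2(0,T;H^1(\X))$ this gives $\uogbar \in H^1(0,T;H^1(\X)) \hookrightarrow C([0,T];H^1(\X))$, and the improved control of the source $\gamma\uogbar$ upgrades $\co$ to $L^\infty(0,T;H^2(\X))$ through its parabolic equation. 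Second, reading \eqref{eq:u0} at almost every fixed time as the elliptic identity $-\dx(\bar\mu\dx\uogbar) = -\dt\uogbar + \bar\chi\dxx\co\,\uogbar + \dx\bar\chi\,\dx\co\,\uogbar + \bar\chi\dx\co\,\dx\uogbar$, I would verify that each summand on the right lies in $L^\infty(0,T;L^2(\X))$, using $\bar\mu,\bar\chi \in W^{1,\infty}(\X)$, the bounds $\co \in L^\infty(0,T;H^2(\X))$ and $\uogbar \in L^\infty(0,T;H^1(\X))$, and the one-dimensional embedding $H^2(\X) \hookrightarrow W^{1,\infty}(\X)$; elliptic regularity for the $W^{1,\infty}$-coefficient operator $\dx(\bar\mu\dx\,\cdot\,)$ then gives $\uogbar \in L^\infty(0,T;H^2(\X))$.

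The step I expect to be the main obstacle is the global continuation in Part~1: the product $\bar\chi\dx\co\,\uogbar$ couples the two unknowns quadratically with the blow-up-prone sign, and the feedback $\|\dx\co\|_{L^\infty(\X)}^2 \lesssim \|\co\|_{H^2(\X)}^2 \lesssim \|\uogbar\|_{L^2(\X)}^2$ turns the energy inequality into a super-linear Grönwall relation that, taken at face value, only controls the solution on a short interval. Securing the global-in-time a-priori bound — which holds precisely because we work in one space dimension with linear diffusion and a bounded chemotactic sensitivity — is the crux; by comparison the regularity bootstrap is routine, its only delicate points being the need to perform the time-differentiated estimates on the Galerkin level and to have $\co \in L^\infty(0,T;H^2(\X))$ at hand to control the drift.
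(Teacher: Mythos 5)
Your regularity bootstrap has a genuine gap at its first step. You differentiate \eqref{eq:u0} in time and run an energy estimate for $p=\dt\bar u^0$ with source $-\dx(\bar\chi\,\dtx\co\,\bar u^0)$, but at that point you only have the weak-solution regularity $\bar u^0\in L^2(0,T;H^1(\X))\cap L^\infty(0,T;L^2(\X))$ and $\co\in L^2(0,T;H^2(\X))\cap H^1(0,T;L^2(\X))$. Differentiating \eqref{eq:c0} with source $\gamma\,\dt\bar u^0\in L^2(0,T;H^{-1}(\X))$ yields at best $\dtx\co\in L^2(0,T;L^2(\X))$, while $\bar u^0\in L^2(0,T;L^\infty(\X))$, so the product $\bar\chi\,\dtx\co\,\bar u^0$ lies only in $L^1(0,T;L^2(\X))$. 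The absorption step in your energy estimate, $\langle\bar\chi\,\dtx\co\,\bar u^0,\dx p\rangle \le \tfrac{1}{4}\bar\mu_{\min}\|\dx p\|_{L^2(\X)}^2 + C\|\bar\chi\,\dtx\co\,\bar u^0\|_{L^2(\X)}^2$, requires the product in $L^2(0,T;L^2(\X))$, and pairing it instead against $\|\dx p\|_{L^2(0,T;L^2)}$ or $\|\dx p\|_{L^\infty(0,T;L^2)}$ fails for the same reason, since neither quantity is yet under control. Performing the estimate on the Galerkin level does not help: the obstruction is not one of rigorous justification but of missing intermediate regularity. The paper inserts exactly the chain you skip: interpolation gives $\bar u^0\in L^4(0,T;L^4(\X))$; maximal $L^4$-regularity \cite{Dore93} for \eqref{eq:c0} gives $\co\in L^4(W^{2,4})\cap W^{1,4}(L^4)$, hence $\co\in C([0,T];W^{3/2,4}(\X))$ and $\co,\dx\co\in L^\infty(L^\infty)$ by interpolation and embedding \cite{amann1995linear,Dinezza12}; then standard parabolic theory gives $\bar u^0\in L^2(H^2)\cap H^1(L^2)$, and differentiating \eqref{eq:c0} \emph{with this improved source} gives $\dt\co\in L^2(H^2)\cap H^1(L^2)$, so that $\dtx\co\in L^2(0,T;L^\infty(\X))$ and $\bar u^0\in L^\infty(0,T;L^\infty(\X))$. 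Only then is your source in $L^2(0,T;L^2(\X))$ and your time-differentiated estimate closes. Your final step (reading \eqref{eq:u0} at fixed time and invoking elliptic regularity with $\bar\mu\in W^{1,\infty}$) matches the paper's and is fine once this is repaired.

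For the first part of the statement your route genuinely differs from the paper's: the paper simply cites \cite{EggerSchoebel20} for existence and uniqueness of the local weak solution, whereas you re-prove it by decoupling and a fixed-point argument parallel to Section~\ref{sec:proof:thm:2}; that is a legitimate, self-contained alternative. However, the step you single out as the crux --- global continuation via one-dimensional Gagliardo--Nirenberg a-priori bounds --- is both unsubstantiated (you only gesture at the estimate) and unnecessary here: the proposition, like the entire paper, is local in time, with $T$ the possibly small horizon fixed in Theorem~\ref{thm:2}, and the Discussion explicitly states that a global-in-time result would require a different analysis. What is actually needed beyond local existence is uniqueness on the given interval $(0,T)$, which your Gr\"onwall argument on the difference of two weak solutions does supply, using $\dx\co\in L^2(0,T;L^\infty(\X))$ from \eqref{eq:c0:reg}.
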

\begin{proof}
Existence of a unique local weak solution $\bar u^0 \in L^2(0,T^*;H^1(\X)) \cap H^1(0,T^*;H^{-1}(\X))$ and $\co \in L^2(0,T^*;H^2(\X)) \cap H^1(0,T^*;L^2(\X))$ up to some time $T^*>0$ depending only on the problem data has been proven in  \cite{EggerSchoebel20}. 
Further note that the regularity of $\sigma$ carries over to $\bar \mu$, $\bar \chi$ immediately.
The additional regularity of the solution is established by bootstrapping. For completeness, let us briefly sketch the main arguments:
Interpolation estimates~\cite[Lemma 7.8]{Roubicek13} provide $\bar u^0\in L^4(L^{4})$.
Maximal regularity~\cite{Dore93} for \eqref{eq:c0} then yields  $\co \in L^4(W^{2,4})\cap W^{1,4}(L^{4})$, and hence $\co \in C([0,T];W^{3/2,4}(\X))$ 
using interpolation~\cite[Thm. III.4.10.2]{amann1995linear}.
% Using again interpolation according to
% \begin{equation}\label{eq:interpolation}
%     L^p(W^{2,p})\cap W^{1,p}(L^{p}) \subset C([0,T];W^{2(1-1/p),p}(\X)), \quad \text{for $1<p<\infty$,}
% \end{equation}
% this shows $\co\in C([0,T];W^{3/2,4}(\X))$ and thus $\co,\dx\co \in L^\infty(L^\infty)$ by standard embedding theory \cite[Thm. 8.2]{Dinezza12}. 
%
By embedding~\cite[Thm. 8.2]{Dinezza12}, we then obtain $\co,\dx\co \in L^\infty(L^\infty)$. 
From this and previous bounds, we thus conclude that $\dx(\bar\chi \dx\co\bar u^0)\in L^{2}(L^{2})$. Standard parabolic theory then shows $u\in  L^{2}(H^{2}) \cap H^1(L^{2})$; see e.g.~\cite[Thm. 7.1.5]{Evans10}.

Now let $\bar c':= \dt \co$ and $\bar u'= \dt \bar u^0 $ denote the time derivatives of the two solution components. Then by formally differentiating \eqref{eq:c0} in time, we see that
    \[
    \dt \bar  c' -D\dxx \bar c' + \beta \bar c' = \gamma \bar u'
    \]
with source term $\gamma\bar u' \in L^2(L^2)$, initial data $\bar c'(0) = D\dxx \bar c_0 -\beta \bar c_0 + \gamma \bar u_0 \in H^2(\X)$ and boundary data $\bar c' = \dt \bar g_c\in W^{1,\infty}(0,T;\RR^2)$.
From \cite[Thm. 7.1.5]{Evans10} we conclude that $\bar c'\in L^2(H^2)\cap H^1(L^2)$.
Differentiation of 
\eqref{eq:u0} %and \eqref{eq:u0:ibc} 
further yields
\begin{align*}
\dt \bar u' - \dx (\bar \mu \dx \bar u' - \bar \chi \dx \co \bar u') &= \bar f \qquad  \text{in } \X \times (0,T)
\end{align*}
with $\bar f= -\dx (\bar \chi \dtx \co \bar u^0)$.  
From the previous estimates, we infer that $\bar f \in L^2(0,T;L^2(\X))$.
By formally differentiating \eqref{eq:u0:ibc} and using \eqref{eq:u0}, we obtain
\begin{align*}
\bar u' = \bar g_u' \quad \text{on } \partial\X \times (0,T)
\qquad \text{and} \quad 
\bar u'(0) = \bar u_0' \quad \text{on } \X,
\end{align*}
with data $\bar g_u' =  \dt \bar g_u \in W^{1,\infty}(0,T;\RR^2)$ and $\bar u_0' =  \dx (\bar \mu \dx \bar u_0 - \bar \chi \dx \bar c_0 \bar u_0)\in L^2(\X)$. 
%
%L^2(0,T;H^{-1}(\X))$: %$\bar u_0' \in L^2(\X)$ and $\bar g_u' \in H^1(0,T;H^1(\partial\X))$. 
%
By parabolic regularity, we thus conclude that $\bar u' \in L^2(0,T;H^1(\X)) \cap L^\infty(0,T;L^2(\X))$, which yields $\bar u^0 \in H^1(0,T;H^1(\X))$ and $\dt \bar u^0 \in L^\infty(0,T;L^2(\X))$. 
From equation~\eqref{eq:u0} and the regularity of $\bar \mu$, we finally obtain $\dxx \bar u^0 \in L^\infty(0,T;L^2(\X))$. 
\end{proof}
%\kh{Higher regularity on $u$ would require higher initial data regularity: e.g. application of parabolic reg to $u'=\dt u$ with $f\in L^2(L^2)$ would require $u'(0)\in H^1$}

\begin{remark} \label{rem:weak}
For the first component, the above notion of a weak solution is equivalent to requiring 
$\bar u^0 \in L^2(0,T;H^1(\X)) \cap L^\infty(0,T;L^2(\X))$ 
and validity of the variational identity 
\begin{align} \label{eq:u0:weak2}
-\langle \bar u^0, \dt \tilde \psi \rangle_{\X_T} + \langle \bar \mu \dx \bar u^0 - \bar \chi \dx \co \bar u^0, \dx \tilde \psi\rangle_{\X_T} &= \langle \bar u_0, \tilde \psi(0)\rangle_\X
\end{align}
for all $\tilde \psi \in C^\infty([0,T];H_0^1(\X))$ with $\tilde \psi(T)=0$; see \cite{Roubicek13} for instance.  
We will make implicit use of this equivalent definition of a weak solution later on in our proofs.
\end{remark}

\subsection{Convergent subsequences}
Due to the uniform bounds provided in Theorem~\ref{thm:2}, we can extract a sequence $(u^\eps,\ceps)$ of solutions which converges to an appropriate limit.

\begin{proposition} \label{pro:limit}
Under the assumptions of Theorem~\ref{thm:2}, there exist functions
\begin{align*}
\bar u^0 \in L^\infty(0,T;H^1(\X)) \quad \text{and} \quad 
\co \in L^2(0,T;H^2(\X)) \cap H^1(0,T;L^2(\X)) 
\end{align*}
and a sequence $(u^\eps,\ceps)_{\eps>0}$ of solutions to \eqref{eq:u}--\eqref{eq:c:ibc} such that with $\eps \to 0$ one has
\begin{alignat*}{5}
u^\eps &\rightharpoonup \bar u^0 \qquad \qquad 
&& \text{weakly in } &&L^2(0,T;L^2(\Q)) \\
v \dx u^\eps &\rightharpoonup v \dx \bar u^0 
&& \text{weakly in } &&L^2(0,T;L^2(\Q)) \\
\ceps &\rightharpoonup \co 
&& \text{weakly in } &&L^2(0,T;H^2(\X)) \cap H^1(0,T;L^2(\X)) \\
\ceps &\to \co 
&& \text{strongly in } &&L^2(0,T;H^1(\X)).
\end{alignat*}
Moreover, the limit function $\co$ satisfies the
initial and boundary conditions stated in \eqref{eq:c0:ibc}.
\end{proposition}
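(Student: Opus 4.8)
\emph{Overview.} The plan is to obtain the limit by purely ``soft'' compactness arguments resting on the uniform-in-$\eps$ a-priori bounds of Theorem~\ref{thm:2}, extracting successive subsequences (not relabelled) and identifying each limit. For the kinetic density, estimates~(a) and~(e) make $(u^\eps)$ and $(v\dx u^\eps)$ bounded in the Hilbert space $L^2(0,T;L^2(\Q))$, so weak compactness provides $u^\eps\rightharpoonup\bar u^0$ and $v\dx u^\eps\rightharpoonup\xi$ for some limits. Estimate~(b) shows $u^\eps-\bar u^\eps\to 0$ strongly in $L^2(0,T;L^2(\Q))$, whence $\bar u^\eps\rightharpoonup\bar u^0$; since the velocity-averaging map is a bounded projection on $L^2(\Q)$ with closed range, that range (the velocity-independent functions) is weakly closed, and therefore $\bar u^0$ is independent of $v$. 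To identify $\xi$, I test against $\phi\in C_c^\infty(\Q\times(0,T))$ and integrate by parts in $x$ as in \eqref{eq:ibp}; the boundary terms vanish by compact support, giving $\langle\xi,\phi\rangle=-\langle\bar u^0,v\dx\phi\rangle$ and hence $\xi=v\dx\bar u^0$ in the sense of distributions. As $\xi\in L^2(0,T;L^2(\Q))$ this yields $\dx\bar u^0\in L^2(0,T;L^2(\X))$ and the asserted convergence $v\dx u^\eps\rightharpoonup v\dx\bar u^0$. By weak-$*$ lower semicontinuity, bound~(a) moreover gives $\bar u^0\in L^\infty(0,T;L^2(\X))$, so $\bar u^0$ carries the spatial $H^1$-regularity recorded in the statement.

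\emph{Compactness for the concentration.} By~(f) and~(g), $(\ceps)$ is bounded in $L^2(0,T;H^2(\X))\cap H^1(0,T;L^2(\X))$, so weak compactness yields $\ceps\rightharpoonup\co$ in both $L^2(0,T;H^2(\X))$ and $H^1(0,T;L^2(\X))$ (the two limits agreeing by uniqueness of weak limits). For the strong convergence I invoke the Aubin--Lions--Simon lemma with the compact embedding $H^2(\X)\hookrightarrow\hookrightarrow H^1(\X)$ and the continuous embedding $H^1(\X)\hookrightarrow L^2(\X)$: this makes $L^2(0,T;H^2(\X))\cap H^1(0,T;L^2(\X))$ compactly embedded in $L^2(0,T;H^1(\X))$, so $\ceps\to\co$ strongly in $L^2(0,T;H^1(\X))$ along a further subsequence. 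The boundary condition follows since $\ceps=\bar g_c$ on $\partial\X\times(0,T)$ and the trace operator $L^2(0,T;H^2(\X))\to L^2(0,T;L^2(\partial\X))$ is linear and continuous, hence weakly continuous; the initial condition follows from the continuous embedding $L^2(0,T;H^2(\X))\cap H^1(0,T;L^2(\X))\hookrightarrow C([0,T];H^1(\X))$, which makes evaluation at $t=0$ continuous, so that $\co(0)=\lim_{\eps\to0}\ceps(0)=\bar c_0$.

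\emph{Main obstacle.} The only genuinely delicate point is the \emph{joint} identification that $\bar u^0$ is velocity-independent and that the weak limit of $v\dx u^\eps$ equals $v\dx\bar u^0$: together these collapse the kinetic limit onto a macroscopic object and are precisely what is needed to pass to the Keller--Segel equation in Part~2. Everything else is standard weak/weak-$*$ compactness together with one Aubin--Lions argument. I would also stress that the crucial \emph{output} of this proposition is the \emph{strong} convergence $\ceps\to\co$ in $L^2(0,T;H^1(\X))$ rather than merely weak convergence, since the nonlinear drift $\bar\chi\,\dx\co\,\bar u^0$ in the limit system cannot be handled by weak convergence of both factors; the strong $H^1$-convergence of $\ceps$, and hence the $L^2$-convergence of $\dx\ceps$, is exactly what will later allow this product to pass to the limit.
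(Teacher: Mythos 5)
Your proposal is correct and takes essentially the same route as the paper's proof: weak compactness (Banach--Alaoglu) from the bounds of Theorem~\ref{thm:2}, the Aubin--Lions lemma for the strong convergence $\ceps \to \co$ in $L^2(0,T;H^1(\X))$, weak continuity of trace and time-evaluation operators for \eqref{eq:c0:ibc}, and estimate~(b) of Theorem~\ref{thm:2} to identify the kinetic limit as velocity-independent --- you argue via weak closedness of the subspace of $v$-independent functions, while the paper applies weak lower semicontinuity of the norm to $u^\eps - \bar u^\eps$, which is the same fact in different packaging. One cosmetic remark: like the paper's own proof, your argument literally delivers $\bar u^0 \in L^\infty(0,T;L^2(\X)) \cap L^2(0,T;H^1(\X))$ rather than the $L^\infty(0,T;H^1(\X))$ recorded in the statement, so you match the paper on this point as well.
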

\begin{proof}
The assertions about $\ceps$ and $\co$ follow immediately from the estimates of Theorem~\ref{thm:2}, the Banach-Alaoglou Theorem~\cite{Conway19}, the Aubin-Lions compactness lemma~\cite[Lemma 7.7]{Roubicek13}, and the continuity of the trace operators in space and time. 
By the uniform bounds for $u^\eps$, we further obtain a subsequence $u^\eps$ and a limit $u^0 \in L^\infty(0,T;L^2(\Q))$ such that $u^\eps \rightharpoonup u^0$ and $v \dx u^\eps \rightharpoonup v \dx u^0$ weakly in $L^2(0,T;L^2(\Q))$. 
From the weak lower-semicontinuity of norms and the estimates of Theorem~\ref{thm:2}, we further deduce that 
\begin{align*}
\|u^0 - \bar u^0\|_{L^2(0,T;L^2(\Q))} 
\le \liminf_{\eps \to 0} \|u^\eps - \bar u^\eps\|_{L^2(0,T;L^2(\Q))} = 0,
\end{align*}
which implies $u^0 = \bar u^0 \in L^2(0,T;L^2(\X))$. By uniqueness of the weak limit, we further see that $v \dx u^0 = v \dx \bar u^0 \in L^2(0,T;L^2(\Q))$ which implies $u^0 = \bar u^0 \in L^2(0,T;H^1(\X))$. 
\end{proof}
As part of our analysis in the next subsection, we will show that the limit $\bar u^0$ also has a weak time derivative and satisfies the initial and boundary conditions \eqref{eq:u0:ibc}. 

\subsection{Further properties of the limiting functions}
To conclude the proof of the first part of Theorem~\ref{thm:3}, we need to show that the limit functions $(\bar u^0,\co)$ provided by Proposition~\ref{pro:limit} are indeed a weak solution to the Keller-Segel system \eqref{eq:u0}--\eqref{eq:c0:ibc}.

\subsection*{Step~1.}
From the linearity of the equations \eqref{eq:c} and \eqref{eq:c0}, and the assertions of Proposition~\ref{pro:limit}, we can immediately deduce that $(\bar u^0,\co)$ satisfies \eqref{eq:c0} and \eqref{eq:c0:ibc}.

\subsection*{Step~2.}
For the analysis of the kinetic equation, we need the following auxiliary result. 
Let $\tilde \psi \in C^\infty([0,T];H_0^1(\X))$ with $\tilde \psi(T)=0$ be given. Then, from \eqref{eq:u} and \eqref{eq:u:ibc}, 
\begin{align*}
\langle v \dx u^\eps, \sigma^{-1} v \dx \tilde \psi\rangle_{\Q_T} 
&= \langle \eps u^\eps, \sigma^{-1} v\dtx \tilde \psi\rangle_{\Q_T} + \langle \alpha v \dx \ceps \bar u^\eps - \eps^{-1}\sigma (u^\eps - \bar u^\eps), \sigma^{-1} v \dx \tilde \psi\rangle_{\Q_T}. 
\end{align*}
For abbreviation we use the notation $\Q_T= \Q\times (0,T)$ and $\X_T= \X\times (0,T)$ in the following. By the uniform bounds for $u^\eps$, we see that 
$\langle \eps u^\eps, \sigma^{-1} v \dtx \tilde\psi\rangle_{\Q_T} \to 0$
with $\eps \to 0$.  
The first part in the last integral can be split into
\begin{align*}
\langle \alpha v \dx \ceps \bar u^\eps,\sigma^{-1} v \dx \tilde \psi\rangle_{\Q_T}
= \langle \alpha v \dx \co \bar u^\eps,\sigma^{-1} v \dx \tilde \psi\rangle_{\Q_T}
+  \langle \alpha v \dx (\ceps - \co) \bar u^\eps,\sigma^{-1} v \dx \tilde \psi\rangle_{\Q_T}.
\end{align*}
The first term on the right hand side converges to $\langle \alpha v \dx \co \bar u^0, \sigma^{-1} v \dx \tilde \psi\rangle_{\Q_T}$. 
Since $\ceps \to \co$ strongly in $L^2(0,T;H^1(\X))$, the last term converges to zero with $\eps \to 0$.
For the remaining term in the identity above, we use \eqref{eq:ibp} and \eqref{eq:u} to obtain that
\begin{align*}
-\langle \eps^{-1}\sigma (u^\eps &- \bar u^\eps), \sigma^{-1} v \dx \tilde \psi\rangle_{\Q_T}
=-\langle \eps^{-1} u^\eps ,v \dx \tilde \psi\rangle_{\Q_T}
= \langle \eps^{-1} v \dx u^\eps, \tilde \psi\rangle_{\Q_T} \\
&= -\langle \dt u^\eps, \tilde \psi \rangle_{\Q_T} - \langle \eps^{-2} \sigma (u^\eps - \bar u^\eps), \tilde \psi \rangle_{\Q_T} + \langle \eps^{-1} \alpha v \dx \ceps \bar u^\eps ,\tilde \psi\rangle_{\Q_T} \\
&= \langle u^\eps, \dt \tilde \psi\rangle_{\Q_T}  + \langle u^\eps(0),\tilde \psi(0)\rangle_{\Q}.
\end{align*}
By combination of these results and those of Proposition~\ref{pro:limit}, we conclude that 
\begin{align*}
%&\langle \dx \bar u^0, \bar \mu \dx \tilde \psi\rangle_{\X_T} = 
\langle v \dx \bar u^0, \sigma^{-1} v \dx \tilde \psi\rangle_{\Q_T}  
&= \lim_{\eps \to 0} \langle v \dx u^\eps, \sigma^{-1} v \dx \tilde \psi\rangle_{\Q_T}\\
&= \langle \alpha v \dx \co \bar u^0, \sigma^{-1} v \dx \tilde \psi\rangle_{\Q_T} + \langle \bar u^0, \dt \tilde \psi\rangle_{\Q_T} + \langle \bar u_0, \tilde \psi(0)\rangle_{\Q}.
\end{align*}

\subsection*{Step~3.}
%
%We now show that the limit $(\bar u^0,\co)$ provided by Proposition~\ref{pro:limit} satisfies \eqref{eq:u0:weak2}.  
%
By definition of $\bar \chi$ and $\bar \mu$, the previous results, 
%and noting that $\int_\Q \bar w(x) \, d(x,v) = 2 \int_\X \bar w(x) \, dx$, 
further yield that 
\begin{align*}
2 \langle \bar \mu \dx \bar u^0, \dx \tilde \psi\rangle_{\X_T}
&= \langle v \dx \bar u^0, \sigma^{-1} v \dx \tilde \psi\rangle_{\Q_T} \\
&= 2 \langle \bar \chi \dx \co \bar u^0, \dx \tilde \psi \rangle_{\X_T} +  2\langle \bar u^0, \dt \tilde \psi\rangle_{\X_T} + 2\langle \bar u_0, \tilde \psi(0)\rangle_{\X}.
\end{align*}
This shows that the limit $(\bar u^0,\co)$ provided by Proposition~\ref{pro:limit} satisfies \eqref{eq:u0:weak2}. 
In particular,
\begin{align*}
\langle \dt \bar u^0,\tilde \psi\rangle_{\X_T} 
&= -\langle \bar \mu \dx \bar u^0 - \bar \chi \dx \co \bar u^0, \dx \tilde \psi\rangle_{\X_T}
\end{align*}
for all $\tilde \psi \in C_0^\infty((0,T);H_0^1(\X))$. From the regularity of $\bar u^0$ and $\co$, we infer that
\begin{align*}
\bar \mu \dx \bar u^0 - \bar \chi \dx \co \bar u^0 \in L^2(0,T;L^2(\X)).
\end{align*}
This shows that $\bar u^0$ has a weak time derivative $\dt \bar u^0 \in L^2(0,T;H^{-1}(\X))$ and that the variational identity \eqref{eq:u0:weak} holds for a.a. $t \in (0,T)$.
\subsection*{Step~4.}
By combination of \eqref{eq:u0:weak} and \eqref{eq:u0:weak2}, we can immediately see that $\bar u^0(0)=\bar u_0$. 
Now let $\tr : u \mapsto u|_{\Gamma}$ denote the trace operator for $u \in U=\{v \in L^2(\Q) : v \dx u \in L^2(\Q)\}$. 
By use of the integation-by-parts formula \eqref{eq:ibp}, we obtain
\begin{align}
\|\tr\, u\|_{L^2(\Gamma;|v| d\Gamma)}^2 = \int_\Gamma |\tr\, u|^2 |v| \, d\Gamma = 2 \int_\Q v \dx u \, u \, d(x,v) \le 2 \|v \dx u\|_{L^2(\Q)} \|u\|_{L^2(\Q)}.
\end{align}
This shows that the trace operator $\tr : L^2(0,T;U) \to L^2(0,T;L^2(\Gamma;|v| d\Gamma))$ is continuous as a mapping between these spaces. 
By Proposition~\ref{pro:limit}, we know that $\bar u^\eps \rightharpoonup \bar u^0$ weakly in $L^2(0,T;U)$, and hence $\tr\, u^\eps \rightharpoonup \tr\, \bar u^0$ weakly in $L^2(0,T;L^2(\Gamma;|v| d\Gamma))$. 
Since $\tr\, u^\eps=\bar g_u$~on~$\Gamma^{in}\times(0,T)$, we infer that $\tr\, \bar u^0=\bar g_u$ on $\Gamma^{in}\times(0,T)$, and since $\bar u^0$ is independent of $v$, we further conclude that $\bar u^0=\bar g^u$ on $\partial \X\times(0,T)$ in a trace sense. 
In summary, we thus have shown that $\bar u^0$ satisfies the initial and boundary conditions \eqref{eq:u0:ibc}, and hence $(\bar u^0,\co)$ is a weak solution of \eqref{eq:u0}--\eqref{eq:c0:ibc}. 

\subsection*{Step~5.}
Since every sequence of solutions $(u^\eps,\ceps)$ to \eqref{eq:u}--\eqref{eq:c:ibc} for $\eps \to 0$ has a subsequence which converges weakly to a weak solution $(\bar u^0,\co)$ of \eqref{eq:u0}--\eqref{eq:c0:ibc} and since the weak solution to this problem is unique, we obtain weak convergence with $\eps \to 0$ for any sequence $(u^\eps,\ceps)$ of solutions. 
This concludes the first part of the proof of Theorem~\ref{thm:3}.

\section{Proof of Theorem~\ref{thm:3}, Part~2}
\label{sec:proof:thm:3b}
In this section, we establish the quantitative convergence rates announced in Theorem~\ref{thm:3}.
Following standard arguments \cite{Dautray93,EggerSchlottbom14}, we decompose
\begin{align}
u^\eps &= \bar u^0 + \eps v \bar u^1 + \phi^\eps
\quad\text{with}\quad \bar u^1 = -\sigma^{-1} \dx \bar u^0  +\alpha\sigma^{-1}  \dx \co \bar u^0, \label{convrate:phiu1:def}\\
\ceps &= \co + \bar\eta^\eps.\label{convrate:eta:def}
\end{align}
To conclude the proof of the theorem, we need to estimate $\bar u^1$ and the two remainder terms $\phi^\eps$ and $\bar \eta^\eps$ defined by the previous expressions. This will be done in the sequel. 

\subsection*{Step~1.}
From the regularity of the limit solution $(\bar u^0,\co)$ provided by Proposition~\ref{pro:u0c0}, we deduce that $\|\bar u^1\|_{L^\infty(0,T;L^\infty(\X))}$, $\|\dx \bar u^1\|_{L^2(0,T;L^2(\X))}$, and $\|\dt \bar u^1\|_{L^2(0,T;L^2(\X))}$ are uniformly bounded.
In addition, we have
\begin{equation}\label{eq:barvu1}
    \overline{v \bar u^1} = 0, \qquad \text{ and consequently} \qquad \bar \phi^\eps = \bar u^\eps - \bar u^0.
\end{equation}
As a consequence, some of the terms in the following investigations will vanish.

\subsection*{Step~2.}
Using the equations defining $\ceps$ and $\co$, we see that $\bar \eta^\eps = \ceps - \co$ satisfies  
\begin{align}
    \dt\bar\eta^\eps - D\dxx\bar\eta^\eps + \beta\bar\eta^\eps
    &= \gamma\bar\phi^\eps \qquad \text{in } \X\times (0,T), \label{eq:eta}
\end{align}
together with homogeneous initial and boundary conditions. 
From standard a-priori estimates for parabolic equations~\cite{Evans10}, we thus obtain 
\begin{align*}
\|\bar \eta^\eps\|_{L^\infty(0,T;L^2(\X))} 
\le \|\bar \eta^\eps\|_{L^\infty(0,T;H^1(\X))} + \|\bar \eta^\eps\|_{L^2(0,T;H^2(\X))} 
\le C \|\phi^\eps\|_{L^2(0,T;L^2(\Q))}.
\end{align*}
% This in particular implies that  *** some estimates ***.  
%$\|\dx \bar\eta^\eps\|_{L^2(0,T;L^\infty)} \le C'\|\phi^\eps\|_{L^2(0,T;L^2(\Q))}$ and also $\|\bar \eta^\eps\|_{L^2(0,T;L^2}\|\phi^\eps\|_{L^2(0,T;L^2(\Q))}$
%
It therefore remains to establish the appropriate bounds for the second remainder $\phi^\eps$.

\subsection*{Step~3.}
From the equations defining $u^\eps$, $\bar u^0$, and $\bar u^1$, we can see that $\phi^\eps$ solves
\begin{alignat}{5}
\eps^2 \dt \phi^\eps + \eps v \dx \phi^\eps + \sigma (\phi^\eps - \bar \phi^\eps) &= \eps f^\eps \qquad && \text{in } \Q \times (0,T), \label{eq:phi1}\\
\phi^\eps &= g^\eps \qquad && \text{on } \Gamma^{in} \times (0,T), \label{eq:phi2} \\
\phi^\eps(0) &= \phi_0^\eps \qquad && \text{on } \Q \label{eq:phi3}
\end{alignat}
with 
$f^\eps=\alpha v \dx \ceps \bar u^\eps - \eps \dt \bar u^0 -  v \dx \bar u^0 - \eps^2 v \dt \bar u^1 - \eps v^2 \dx \bar u^1 - \sigma v \bar u^1$, 
$g^\eps=-\eps v \bar u^1$, and initial value $\phi^\eps_0 = -\eps v \bar u^1(0)$.
Following the arguments in \cite{EggerSchlottbom14_Lpestimates}, 
we split $\phi^\eps = \phi^\eps_f + \phi^\eps_g$ with $\phi^\eps_f$ solving the same system, but with homogeneous boundary conditions, and $\phi^\eps_g$ satisfying the equations with homogeneous right hand side and homogeneous initial conditions. 

\subsection*{Step~3a.}
As a preliminary result, we establish $L^p$ bounds for $\phi^\eps_g$, elaborating explicitly the dependence on $\eps$. 
To do so, we multiply \eqref{eq:phi1} with $f^\eps=0$ by $|\phi^\eps_g|^{p-2} \phi^\eps_g$, which yields 
\begin{align*}
    \eps^2 \tfrac{1}{p} \dt |\phi^\eps_g|^p + \eps v \tfrac{1}{p} \dx |\phi^\eps_g|^p  =  \sigma(\bar \phi^\eps_g-\phi^\eps_g) |\phi^\eps_g|^{p-2}\phi^\eps_g.
\end{align*}
Integration over $\Q\times(0,t)$, application of the integration-by-parts formula~\eqref{eq:ibp}, and of the inhomogeneous boundary and homogeneous initial conditions for $\phi^\eps_g$ yields
\begin{align}\label{eq:phig:1}
    &\tfrac{\eps^2}{p} \|\phi^\eps_g(t)\|^p_{L^p(\Q)} + \int_0^t\int_{\Gamma^{out}} \tfrac{\eps}{p} |v||\phi^\eps_g|^p \ d\Gamma\ ds\\
    &= %\tfrac{\eps^2}{p} \|\phi^\eps_g (0)\|^p_{L^p(\Q)} \! + \! 
    %\int_0^t\int_{\partial \Q^{in}} \tfrac{\eps}{p} |v||\phi^\eps_g|^p \ d(x,v) \ ds \! 
    \int_0^t\int_{\Gamma^{in}} \tfrac{\eps}{p} |v||\eps v \bar u^1|^p \ d\Gamma \ ds
    +  \int_0^t \int_{\Q} \sigma (\bar \phi^\eps_g - \phi^\eps_g) |\phi^\eps_g|^{p-2}\phi^\eps_g \ d(x,v) \ ds.\nonumber%\\
    %&= (i) + (ii) + (iii).\nonumber
\end{align}
By Hölder's inequality, we see that 
\begin{align*}
\int_\V  \bar u |u|^{p-2} u \ dv 
&\leq \left(\int_\V  |\bar u|^p \ dv\right)^{1/p}\left(\int_\V  (|u|^{p-1})^{\tfrac{p}{p-1}} \ dv\right)^{(p-1)/p}
\end{align*}
for any $u \in L^p(\V)$, and Jensen's inequality further yields $\left(\int_\V  |\bar u  |^p \ dv\right)^{1/p} \leq \left(\int_\V  |u|^p \ dv\right)^{1/p}$ for any $p \ge 2$.
By combination of these estimates, we conclude that the last term in the second line of \eqref{eq:phig:1} is non-positive. 
By simplification of the inequality, we thus obtain
\begin{align*}
 \|\phi^\eps_g(t)\|_{L^p(\Q)}  \leq \eps^{-1/p}\left( \int_0^t\int_{\Gamma^{in}}  |v|^2|\eps \bar u^1|^p \ d\Gamma \ ds  \right)^{1/p} \le C \eps^{1-1/p}.
\end{align*}
For the last step, we used the uniform boundedness of $\bar u^1$. 
By letting $p\to \infty$, we obtain 
\begin{align}\label{phig:order}
\|\phi^\eps_g\|_{L^\infty(0,T;L^\infty(\Q))} 
= \lim_{p \to \infty} \|\phi^\eps_g\|_{L^\infty(0,T;L^p(\Q))} \leq C' \eps. 
\end{align}
Since $\X$ is bounded, we further obtain $\|\phi^\eps_g\|_{L^\infty(0,T;L^2(\Q))} \leq C \|\phi^\eps_g\|_{L^\infty(0,T;L^\infty(\Q))} \le C'' \eps$, which is the desired bound for the first component of $\phi^\eps$.

\subsection*{Step~3b.}
We start with having a closer look at the right hand side of \eqref{eq:phi1}. 
From the definition of $\bar u^1$, we see that $\sigma \bar u^1 = \alpha\dx \co \bar u^0 - \dx \bar u^0$. 
Further using \eqref{eq:c0}, we can rewrite 
\begin{align} \label{eq:feps}
f^\eps &=\alpha v (\dx \ceps \bar u^\eps - \dx \co \bar u^0) -\eps^2 v \dt \bar u^1 - \eps [v^2 \dx \bar u^1 + \dx (\bar \mu \dx \bar u^0 - \bar \chi \dx \co \bar u^0)] .
\end{align}
Using the formulas defining $\bar \mu$, $\bar \chi$, and $\bar u^1$, we see that 
\begin{align*}
\int_\V v^2 \dx \bar u^1 + \dx (\bar \mu \dx \bar u^0 - \bar \chi \dx \co \bar u^0) \, dv 
= 0,
\end{align*}
and hence $\bar f^\eps = 0$. 
Let us note that the function $f^\eps$ can be treated like the term $v \bar f_1$ in the system \eqref{eq:w1}--\eqref{eq:w2}. From the energy estimate \eqref{eq:w:bound}, we may thus conclude that 
\begin{align*}
\|\phi^\eps_f\|^2_{L^\infty(0,t;L^2(\Q))} 
\le e^{t} (\|\phi_f^\eps(0)\|^2_{L^2(\Q)} + \|f^\eps\|_{L^2(0,t;L^2(\Q))}^2)
\end{align*}
for any $0 \le t \le T$. 
From the bounds for $\bar u_1$, we obtain $\|\phi^\eps_f(0)\|^2_{L^2(\Q)} \le C_0 \eps^2$. 
The right hand side \eqref{eq:feps} may be split into $f^\eps = f^\eps_0 + \eps f^\eps_1 + \eps^2 f^\eps_2$. 
From the bounds for $\bar u^0$ and $\bar u^1$, we immediately obtain that $\|\eps f^\eps_1 + \eps^2 f^\eps_2\|_{L^2(0,T;L^2(\Q))} \le C \eps$.
Using Hölder and triangle inequalities, the remaining term can be estimated as follows
\begin{align*}
\|f_0^\eps\|_{L^2(L^2)} 
&\le \alpha \|\dx \ceps \bar u^\eps - \dx \co \bar u^0\|_{L^2(L^2)}\\
& \le \alpha (\|\dx \ceps - \dx \co\|_{L^2(L^\infty)} \|\bar u^\eps\|_{L^\infty(L^2)} + \|\dx \co\|_{L^\infty(L^\infty)} \|\bar u^\eps - \bar u^0\|_{L^2(L^2)}).
\end{align*} 
By previous a-priori estimates, we can bound $\|\bar u^\eps\|_{L^\infty(L^2)} \le C$ and $\|\dx \co\|_{L^\infty(L^\infty)} \le C$. 
Using the equation defining $\bar \eta^\eps=\ceps-\co$, the definition of $\phi^\eps$, and the left identity of \eqref{eq:barvu1}, we see that the two terms involving differences can be further bounded by 
\begin{align*}
\|\dx \ceps - \dx \co\|_{L^2(L^\infty)}^2 + \|\bar u^\eps - \bar u^0\|_{L^2(L^2)}^2
\le C_2 \|\phi^\eps\|_{L^2(L^2)}^2 \le C_3 (\|\phi^\eps_f\|^2_{L^2(L^2)} + \eps^2). 
\end{align*}
In the last step, we used the bound for $\|\phi_g^\eps\|^2_{L^2(L^2)}$ from before.
In summary, we thus get 
\begin{align*}
\|f^\eps\|_{L^2(0,t;L^2(\Q))}^2 \le C_1 \|\phi^\eps_f\|^2_{L^2(0,t;L^2(\Q))} + C_2 \eps^2. 
\end{align*}
Inserting this into the energy estimate \label{eq:phif:bound} for $\phi^\eps_f$ and applying a Grönwall inequality then leads to the uniform bound $\|\phi^\eps_f\|^2_{L^\infty(0,T;L^2(\Q))} \le C_2' \eps^2$. 

\subsection*{Step~3c.}
By combination of the previous estimates, we see that 
$$
\|\phi^\eps\|_{L^\infty(L^2)} \le \|\phi^\eps_f\|_{L^\infty(L^2)} + \|\phi^\eps_g\|_{L^\infty(L^2)} \le C \eps.
$$
From the definition of $\phi^\eps$ and the uniform bounds for $\bar u^1$, we then get
$$
\|u^\eps - \bar u^0\|_{L^\infty(0,T;L^2(\Q))} 
\le \eps \|v \bar u^1\|_{L^\infty(0,T;L^2(\Q))} + \|\phi^\eps\|_{L^\infty(0,T;L^2(\Q))} 
\le C \eps.
$$ 
From the arguments of Step~2, we conclude that $\|\ceps - \co\|_{L^\infty(0,T;L^2(\Q))} \le C \eps$ as well, which proves the convergence rates stated in Theorem~\ref{thm:3}. \qed

\section{Discussion}
\label{sec:conclusion}

We established well-posedness of a kinetic chemotaxis model in slab geometry and convergence to a drift-diffusion system of Keller-Segel type in the asymptotic limit. The results are valid on a finite time interval $[0,T]$ with $T>0$ depending on the problem data. 

From the global well-posedness of the limit problem in one space dimension, one would expect that the results should be valid for $T=\infty$. A different analysis would, however, be required, probably involving the positivity of solutions and uniform bounds in $L^1$; compare with the proofs in~\cite{EggerSchoebel20} for the corresponding limit problem.   
Another way to obtain global-in-time results is to replace the nonlinear coupling term in~\eqref{eq:u} by $\eps \alpha v \Psi(\dx \cepsg) \uepsgbar $ for a suitable bounded function $\Psi$, which acts as \emph{flux limiting}. Our results can be generalized to this case.
Such models were recently introduced to avoid nonphysical blow-up behavior and were shown to attain global solutions \cite{Perthame19}; also see \cite{burger06, bellomo2017} for the related analysis of the flux-limited Keller-Segel model in multiple space dimensions.

Let us note that our results can be  extended quite naturally to related chemotaxis models on networks which were introduced as mathematical models for dermal wound healing \cite{Bretti14} or the growth of slime molds \cite{BorscheGoettlich14}. 
The fact that the velocity space $\V=(-1,1)$ involves a whole interval here means that agents can actually move at different speeds. Such systems have been studied numerically in~\cite{Borsche16}.
The asymptotic analysis for somewhat simplified coupling conditions has been derived~\cite{Philippi23}, and corresponding results for the limiting Keller-Segel model on networks were obtained in~\cite{BorscheGoettlich14,EggerSchoebel20}. 

Possible topics for future research include the  extension of our results to more complex interaction mechanisms, involving multiple species, different chemo-attractants, or more complex coupling mechanisms; see \cite{Bitsouni2018,Emako2015,Ren2020} for some recent work in this direction.
The design of asymptotic preserving numerical schemes for chemotaxis in slab geometry and related models on networks, in a similar spirit to \cite{Gosse13,Natalini2012, CarilloYan13}, would be a topic for future research.

\subsection*{Funding}
KH acknowledges support of the German Academic Scholarship Foundation (Studienstiftung des deutschen Volkes) as well as the Marianne-Plehn-Programm.
HE and NP are grateful for support by the German Research Foundation (DFG) through the Collaborative Research Center TRR~154.

%% BIBLIOGRAPHY %%%%%%%%
%\begingroup
\bibliographystyle{abbrv}
\bibliography{biblio}

\begin{thebibliography}{10}

\bibitem{Agoshkov98}
V.~Agoshkov.
\newblock {\em Boundary Value Problems for Transport Equations}.
\newblock Birkh\"auser, Boston, 1998.

\bibitem{Alt1980}
W.~Alt.
\newblock Biased random walk models for chemotaxis and related diffusion
  approximations.
\newblock {\em J. Math. Biol.}, 9:147--177, 1980.

\bibitem{amann1995linear}
H.~Amann.
\newblock {\em Linear and Quasilinear Parabolic Problems: Volume I: Abstract
  Linear Theory}.
\newblock Springer Science \& Business Media, 1995.

\bibitem{Augner2024}
B.~Augner.
\newblock Lp-maximal regularity for parabolic and elliptic boundary value
  problems with boundary conditions of mixed differentiability orders.
\newblock {\em J. Diff. Equat.}, 388:286--356, 2024.

\bibitem{BalRyzhik00}
G.~Bal and L.~Ryzhik.
\newblock Diffusion approximation of radiative transfer problems with
  interfaces.
\newblock {\em SIAM J. Appl. Math.}, 60:1887--1912, 2000.

\bibitem{Bardos88}
C.~Bardos, F.~Golse, B.~Perthame, and R.~Sentis.
\newblock The nonaccretive radiative transfer equations: existence of solutions
  and {R}osseland approximation.
\newblock {\em J. Funct. Anal.}, 77:434--460, 1988.

\bibitem{bellomo2017}
N.~Bellomo and M.~Winkler.
\newblock A degenerate chemotaxis system with flux limitation: Maximally
  extended solutions and absence of gradient blow-up.
\newblock {\em Communications in Partial Differential Equations},
  42(3):436--473, 2017.

\bibitem{bhattacharjee21}
T.~Bhattacharjee, D.~B. Amchin, J.~A. Ott, F.~Kratz, and S.~Datta.
\newblock Chemotactic migration of bacteria in porous media.
\newblock {\em Biophys. J.}, 120:3483--3497, 2021.

\bibitem{Bitsouni2018}
V.~Bitsouni and R.~Eftimie.
\newblock Non-local parabolic and hyperbolic models for cell polarisation in
  heterogeneous cancer cell populations.
\newblock {\em Bull. Math. Biol.}, 80:2600--2632, 2018.

\bibitem{BorscheGoettlich14}
R.~Borsche, S.~G\"{o}ttlich, A.~Klar, and P.~Schillen.
\newblock The scalar {K}eller-{S}egel model on networks.
\newblock {\em Math. Models Methods Appl. Sci.}, 24:221--247, 2014.

\bibitem{Borsche16}
R.~Borsche, J.~Kall, A.~Klar, and T.~N.~H. Pham.
\newblock Kinetic and related macroscopic models for chemotaxis on networks.
\newblock {\em Math. Models Methods Appl. Sci.}, 26:1219--1242, 2016.

\bibitem{Bretti14}
G.~Bretti, R.~Natalini, and M.~Ribot.
\newblock A hyperbolic model of chemotaxis on a network: a numerical study.
\newblock {\em ESAIM Math. Model. Numer. Anal.}, 48:231--258, 2014.

\bibitem{burger06}
M.~Burger, M.~Di~Francesco, and Y.~Dolak-Struss.
\newblock The {K}eller-{S}egel model for chemotaxis with prevention of
  overcrowding: Linear vs. nonlinear diffusion.
\newblock {\em SIAM J. Math. Anal.}, 38:1288--1315, 2006.

\bibitem{CarilloYan13}
J.~A. Carrillo and B.~Yan.
\newblock An asymptotic preserving scheme for the diffusive limit of kinetic
  systems for chemotaxis.
\newblock {\em Multiscale Modeling \& Simulation}, 11:336--361, 2013.

\bibitem{Cazenave_Heraux}
T.~Cazenave and A.~Haraux.
\newblock {\em An introduction to semilinear evolution equations}.
\newblock Clarendon Press, Oxford, 1998.

\bibitem{Chalub04}
F.~Chalub, P.~Markowich, B.~Perthame, and C.~Schmeiser.
\newblock Kinetic models for chemotaxis and their drift-diffusion limits.
\newblock {\em Monatsh. Math.}, 142:123--141, 2004.

\bibitem{Conway19}
J.~B. Conway.
\newblock {\em A course in functional analysis}.
\newblock Springer, 2019.

\bibitem{Dautray93}
R.~Dautray and J.-L. Lions.
\newblock {\em Mathematical analysis and numerical methods for science and
  technology. {V}ol. 6}.
\newblock Springer-Verlag, Berlin, 1993.

\bibitem{Dinezza12}
E.~Di~Nezza, G.~Palatucci, and E.~Valdinoci.
\newblock Hitchhiker's guide to the fractional {S}obolev spaces.
\newblock {\em Bull. Sci. Math.}, 136:521--573, 2012.

\bibitem{Dore93}
G.~Dore.
\newblock {$L^p$} regularity for abstract differential equations.
\newblock In H.~Komatsu, editor, {\em Functional Analysis and Related Topics},
  pages 25--38, Berlin, Heidelberg, 1993. Springer.

\bibitem{Dormann06}
D.~Dormann and C.~J. Weijer.
\newblock Chemotactic cell movement during dictyostelium development and
  gastrulation.
\newblock {\em Current opinion in genetics \& development}, 16:367--373, 2006.

\bibitem{EggerSchlottbom12}
H.~Egger and M.~Schlottbom.
\newblock A mixed variational framework for the radiative transfer equation.
\newblock {\em Math. Models Methods Appl. Sci.}, 22:1150014, 30, 2012.

\bibitem{EggerSchlottbom14_Lpestimates}
H.~Egger and M.~Schlottbom.
\newblock {An Lp theory for stationary radiative transfer}.
\newblock {\em Appl. Analysis}, 93:1283--1296, 2014.

\bibitem{EggerSchlottbom14}
H.~Egger and M.~Schlottbom.
\newblock Diffusion asymptotics for linear transport with low regularity.
\newblock {\em Asymptot. Anal.}, 89:365--377, 2014.

\bibitem{EggerSchoebel20}
H.~Egger and L.~Sch\"obel-Kr\"ohn.
\newblock Chemotaxis on networks: Analysis and numerical approximation.
\newblock {\em ESAIM: M2AN}, 54:1339--1372, 2020.

\bibitem{Emako2015}
C.~Emako, L.~N. de~Almeida, and N.~Vauchelet.
\newblock Existence and diffusive limit of a two-species kinetic model of
  chemotaxis.
\newblock {\em Kinetic and Related Models}, 8:359--380, 2015.

\bibitem{Evans10}
L.~C. Evans.
\newblock {\em Partial differential equations}, volume~19 of {\em Graduate
  Studies in Mathematics}.
\newblock American Mathematical Society, Providence, RI, second edition, 2010.

\bibitem{Gaveau17}
A.~Gaveau, C.~Coetsier, C.~Roques, P.~Bacchin, E.~Dague, and C.~Causserand.
\newblock Bacteria transfer by deformation through microfiltration membrane.
\newblock {\em J. Membrane Sci.}, 523:446--455, 2017.

\bibitem{Gosse13}
L.~Gosse.
\newblock A well-balanced scheme for kinetic models of chemotaxis derived from
  one-dimensional local forward--backward problems.
\newblock {\em Mathematical Biosciences}, 242:117--128, 2013.

\bibitem{HillenPainter09}
T.~Hillen and K.~J. Painter.
\newblock A user's guide to {PDE} models for chemotaxis.
\newblock {\em J. Math. Biol.}, 58:183--217, 2009.

\bibitem{HillenStevens00}
T.~Hillen and A.~Stevens.
\newblock Hyperbolic models for chemotaxis in 1-d.
\newblock {\em Nonlinear Analysis: Real World Applications}, 1, 10 2000.

\bibitem{Horstmann03}
D.~Horstmann.
\newblock From 1970 until present: the {K}eller-{S}egel model in chemotaxis and
  its consequences. {I}.
\newblock {\em Jahresber. Deutsch. Math.-Verein.}, 105:103--165, 2003.

\bibitem{KellerSegel1971}
E.~F. Keller and L.~A. Segel.
\newblock Model for chemotaxis.
\newblock {\em J. Theor. Biol.}, 30:225--234, 1971.

\bibitem{Kretschmer1980}
R.~R. Kretschmer and M.~L. Collado.
\newblock Chemotaxis.
\newblock {\em Infection}, 8 (Suppl 3):S299--S302, 1980.

\bibitem{lauffenburger91}
D.~A. Lauffenburger.
\newblock Quantitative studies of bacterial chemotaxis and microbial population
  dynamics.
\newblock {\em Microbial Ecology}, 22:175--185, 1991.

\bibitem{Natalini2012}
R.~Natalini and M.~Ribot.
\newblock Asymptotic high order mass-preserving schemes for a hyperbolic model
  of chemotaxis.
\newblock {\em SIAM J. Numer. Anal.}, 50:883--905, 2012.

\bibitem{Othmer88}
H.~G. Othmer, S.~R. Dunbar, and W.~Alt.
\newblock Models of dispersal in biological systems.
\newblock {\em J. Math. Biol.}, 26:263--298, 1988.

\bibitem{OthmerHillen2002}
H.~G. Othmer and T.~Hillen.
\newblock The diffusion limit of transport equations {II}: {C}hemotaxis
  equations.
\newblock {\em SIAM J. Appl. Math.}, 62, 2002.

\bibitem{Patlak1953}
C.~S. Patlak.
\newblock Random walk with persistence and external bias.
\newblock {\em Bull. Math. Biophys.}, 15:311--338, 1953.

\bibitem{Perthame07}
B.~Perthame.
\newblock {\em Transport equations in biology}.
\newblock Birkh\"{a}user Verlag, Basel, 2007.

\bibitem{Perthame19}
B.~Perthame, N.~Vauchelet, and Z.~Wang.
\newblock {The Flux Limited Keller-Segel System; Properties and Derivation from
  Kinetic Equations}.
\newblock {\em {Revista Matem{\'a}tica Iberoamericana}}, 36, 2019.

\bibitem{Philippi23}
N.~Philippi.
\newblock {\em Asymptotic analysis and numerical approximation of some partial
  differential equations on networks}.
\newblock PhD thesis, TU Darmstadt, 2023.

\bibitem{Ren2020}
G.~Ren and B.~Liu.
\newblock Global existence and asymptotic behavior in a two-species chemotaxis
  system with logistic source.
\newblock {\em J. Diff. Equat.}, 269:1484--1520, 2020.

\bibitem{Roubicek13}
T.~Roub\'{\i}\v{c}ek.
\newblock {\em Nonlinear partial differential equations with applications}.
\newblock Birkh\"{a}user/Springer Basel AG, Basel, second edition, 2013.

\bibitem{SadrGhayeni1999}
S.~B. Sadr~Ghayeni, P.~J. Beatson, A.~J. Fane, and R.~P. Schneider.
\newblock Bacterial passage through microfiltration membranes in wastewater
  applications.
\newblock {\em J. Membrane Sci.}, 153:71--82, 1999.

\bibitem{Wu05}
D.~Wu.
\newblock Signaling mechanisms for regulation of chemotaxis.
\newblock {\em Cell research}, 15:52--56, 2005.

\end{thebibliography}
%\endgroup

\end{document}